\newtheorem{theorem}{Theorem}
\newtheorem{lemma}{Lemma}
\newcommand{\pro}{\text{{\LARGE $\sqcap$}}}
\newtheorem{remark}{Remark}
\title{Optimization over time-varying networks with unbounded delays
}
\author{Arunselvan Ramaswamy
\thanks{This work was supported by the German Research Foundation (DFG)
(project number 315248657)}
\thanks{Department of Electrical Engineering and Computer Science, Paderborn University, Paderborn - 33102, Germany
        {\tt\small arunr@mail.upb.de}} \\
        Adrian Redder \thanks{Department of Electrical Engineering and Computer Science, Paderborn University, Paderborn - 33102, Germany
        {\tt\small aredder@mail.upb.de}} \\
        Danie E. Quevedo \thanks{Department of Electrical Engineering and Computer Science, Paderborn University, Paderborn - 33102, Germany
        {\tt\small dquevedo@ieee.org}}
}
\begin{document}
\maketitle
\begin{abstract}
Solving optimization problems in multi-agent systems (MAS) involves information exchange between agents. These solutions must be robust to delays and errors that arise from an unreliable wireless network which typically connects the MAS. In today's large-scale dynamic Internet Of Things style multi-agent scenarios, the network topology changes and evolves over time. In this paper, we present a simple distributed gradient based optimization framework and an associated algorithm. Convergence to a minimum of a given objective is shown under mild conditions on the network topology and objective. Specifically, we only assume that a message sent by a sender reaches the intended receiver, possibly delayed, with some positive probability. To the best of our knowledge ours is the first analysis under such weak general network conditions. We also discuss in detail the verifiability of all assumptions involved. This paper makes a significant technical contribution in terms of the allowed class of objective functions. Specifically, we present an analysis wherein the objective function is such that its sample-gradient is merely locally Lipschitz continuous. The theory developed herein is supported by numerical results. Another contribution of this paper is a consensus algorithm based on the main framework/ algorithm. The long-term behavior of this consensus algorithm is a direct consequence of the theory presented. Again, we believe that ours is the first consensus algorithm to account for unbounded stochastic communication delays, in addition to time-varying networks.
\end{abstract}
\section{INTRODUCTION}\label{sec_introduction}
Systems consisting of multiple autonomous agents that interact with each other to solve local and global problems are called Multi-agent Systems (MAS). The agents involved may operate on different spatial and temporal scales, although in a cooperative manner. Examples include, smart electricity grids and buildings, vehicular and mobile networks, and the Internet of Things (IoT). Typically, MAS are large-scale in nature and spread over large geographical areas. It is therefore convenient and cost-effective to facilitate agent interactions through the use of wireless communication networks. Although cost effective and easy to set-up, wireless networks are prone to delays and errors. 

The agents interact and cooperate with each other to solve a system-level problem, while performing local computations and taking local decisions. System-level problems such as multi-agent learning, distributed networked control, consensus, etc., may be solved by viewing them as distributed optimization problems. Here, the classical optimization paradigm is not applicable. This is because it assumes the presence of a central entity that has access to all available problem data. In the scenario considered herein, a central entity may or may not exist. When present, it does not have access to all problem data. It may however request and obtain data from other agents in the system. As stated earlier, all communications are over a lossy delayed wireless network, whose topology and quality of service (QoS) may be time-varying. The reader is referred to \cite{nedic2018distributed} for a survey of distributed optimization algorithms for networked control problems.

In this paper, we are interested in solving the following optimization problem in a decentralized manner:
\begin{equation}\label{intro_opt_prob} 
x^* = (x_1^*, \ldots, x_D^*) = \underset{x \in \mathbb{R}^d}{\text{argmin }} F(x),
\end{equation}
where $D$ is the number of agents in the MAS, $F$ is a given stochastic global objective function,  $x^*_i$ is the local optimal decision variable of agent-i for $1 \le i \le D$, and $d$ is the sum of the system dimensions of the $D$ agents. Starting from an initial random estimate $x_i ^0$, of $x_i ^*$,  agent-i iteratively refines its estimates such that it converges to $x_i ^*$, together with other agents simultaneously. Put together, these local optima minimize the global objective. 

Broadly speaking, the distributed delayed gradient-based optimization procedure presented here entails the following. At time $n$, agent-i uses local estimates $x_j ^{m(j)}$, $j \neq i$, $0 \le m(j) \le n$, obtained from other agents via the wireless network. Ideally, agent-i would like to have $x_j ^n$, $j \neq i$. However, due to network delays and errors, it may only have access to older estimates, hence we have the condition that $0 \le m(j) \le n$. Due to the large system size, every pair of agents may \emph{not} be connected by a dedicated wireless channel. Instead, information is relayed through other agents in the system, leading to further delays. Put succinctly, we show that the optimization problem can be solved through distributed delayed gradient descent, provided there is a positive probability of successful information exchange between every pair of agents. This probability may vary over time, thereby allowing for time-varying network topologies.

The literature on distributed optimization is too long to list. We only mention a few recent ones and compare them to the results found herein. An accelerated algorithm to tackle distributed optimization problems wherein local functions themselves are a sum, is studied in \cite{hendrikx2019asynchronous}. In \cite{wang2019distributed}, a Kiefer--Wolfowitz style distributed descent algorithm is developed to solve non-convex and non-smooth optimization problems in a distributed manner. Although \cite{wang2019distributed} considers time-varying network topologies, the assumptions on network quality and topology are strict. Specifically, they assume that (a) the associated network graph is doubly stochastic at every time-step, (b) the probability of successful transmissions across any link is lower bounded by a fixed constant that does not vary over time, and (c) every pair of agents must be periodically connected by a direct link/ channel. On the other hand, in this paper, we merely assume that there is a positive probability associated with inter-agent information exchange, that may vary over time. While we do assume that the objective function is differentiable, we may drop this assumption and use Kiefer-Wolfowitz style finite difference terms, similar to \cite{wang2019distributed},  or SPSA-C terms similar to \cite{ramaswamy2018analysis}, instead. The reader is referred to Remark~\ref{asmp_r2} for a short discussion on this. In another related work, \cite{yin2018switching}, the network topology switching process is assumed to have a unique stationary distribution. Although we do not explicitly consider network switching, our assumptions imply that there is no requirement for the aforementioned unique stationary distribution. There may be multiple limiting distributions or the switching process may even oscillate.

Another problem considered in this paper is that of consensus in MAS. Here, the agents seek to find a common control objective starting from a random one, through interactions and local computations. The cumulative consensus problem is given by:
\[
x^* = \underset{x \in \mathbb{R}^d}{\text{argmin }} \sum \limits_{i=1}^D f_i(x),
\]
where $f_i$ is a local objective of agent-i that is not accessible to others. This problem is studied in \cite{sirb2018decentralized} under the added complexity that only delayed local gradients are available. Further, these delays may be stochastic and unbounded. Subgradient based consensus algorithms are studied in \cite{nedic2009distributed}, constraints and time-delays are tackled in \cite{hou2018constrained}, and rate of convergence is studied in \cite{nedic2010convergence}. The theory developed herein, to tackle distributed optimization, is utilized to develop a simple distributed gradient-based consensus algorithm. It accommodates unbounded stochastic delays, even with respect to information exchanged between agents. This latter type of delay is not considered in \cite{sirb2018decentralized}. Again, we show that consensus can be achieved in the presence of time-varying network topologies.

\subsection{Our Contributions}
We have developed simple sufficient conditions for convergence of distributed (approximate) gradient-based algorithms, used to solve large-scale distributed stochastic optimization problems. For this, we have used tools from Stochastic Approximation Algorithms \cite{benaim2005stochastic, borkar2009stochastic, borkar1998asynchronous} and Viability Theory \cite{aubin2012differential}. The theory accounts for stochastic unbounded delays in the information exchanged between agents, that usually arises from the underlying wireless network. The analysis can be trivially extended to accommodate for computational delays in obtaining local gradients. This theory naturally yields an algorithm that uses distributed delayed gradients. In Remark~\ref{asmp_r2}, we discuss how this algorithm can be extended to use Kiefer-Wolfowitz style gradient approximations instead of exact gradients. 

Convergence is shown under very mild conditions on communication. Specifically, we assume that there is a positive probability of successful, possibly delayed, information exchange between agents. We also assume that the transmissions across different channels are independent. Unlike assumptions that are popular in literature, these conditions naturally account for a time-varying network topology. They also accommodate multi-hop networks that may connect agents. In cases where the topology change is governed by a switching process, our algorithm is guaranteed to converge even when this process has multiple stationarities. Finally, the theory is used to develop an easy to implement consensus algorithm.

In the next section, we set up the problem of interest and state it formally. We then present the general form of an iteration, that uses delayed gradient information, to solve a given stochastic distributed optimization problem. This iteration will later form the basis for the development of the previously mentioned algorithms.
\section{Distributed optimization over time-varying networks} \label{sec_usc}
We consider a $D-$agent system denoted by $\mathcal{V} \coloneqq \{1, \ldots, D\}$. The collective goal of this D-agent system is to minimize a given global stochastic objective function $F$, in a cooperative manner. As stated before, the minimizer $x^*$ of $F$, is composed of $x^* _i$, $1 \le i \le D$, where $x^* _i$ is interpreted as a local optimal decision variable with respect to agent-i. The decision space of agent-i is given by $\mathbb{R}^{d_i}$, such that $\sum \limits_{i=1}^D d_i = d$. Each agent starts with a completely random estimate $x^0_i \in \mathbb{R}^{d_i}$, of $x^*_i$, and refines this iteratively through gradient-based searches in its decision space. Specifically, the descent direction is given by the partial derivative taken with respect to the $x_i$-component, i.e., $\nabla_{x_i}F$. To calculate this partial derivative, at every stage, it requires estimate information from other agents. This information is obtained using a wireless network that is prone to losses and delays. In other words, at time $n$, agent-i refines its estimate using the following gradient descent step:
\begin{equation}
\label{usc_unreal}
x^{n+1}_i = x^n _i - a(n) \nabla_{x_i} F(x^{m(1)} _1, \ldots, x^{m(D)} _D), \text{ where}
\end{equation}
\textbf{(i)} $F:\mathbb{R}^d \to \mathbb{R}$ is the given stochastic cost function,\\
\textbf{(ii)} $x^{m(j)}_j$ is the $m(j)^{th}$ estimate of agent-j, $1 \le j \le D$, with $0 \le m(j) \le n$. \\
\textbf{(iii)} $\nabla_{x_i} (\cdotp)$ denotes the partial derivative with respect to vector $x_i$,\\
\textbf{(iv)} $\{a(n)\}_{n \ge 0}$ is the given step-size sequence.

In \eqref{usc_unreal} the delay $m(j)$ is anywhere between $0$ and $n$ ($0 \le m(j) \le n$) to account for information delays. Due to the large system size, each agent-pair may not be connected by a dedicated wireless channel. Instead, information is exchanged by forwarding it through other agents along a path connecting them, which causes further delays. 

\textbf{Graph view of MAS. }\emph{A MAS that is connected by a wireless network can be represented by a graph.} There is one node for every agent in the MAS. Two nodes are connected by an edge \textit{if and only if}  the corresponding agents can communicate directly. Communication between a pair of agents is considered ``direct'' when there is no involvement from other agents. In real-world scenarios this may be realized when a pair is connected by a dedicated wireless channel, or a multi-hop network that does not contain other agents. Further, the channels may be unidirectional which are represented using directional edges. Hence, in the most general case, a MAS is represented by a digraph (directed graph) \cite{west1996introduction}. The channels themselves, in general, may be modeled as hidden Markov fading channels \cite{turin1998hidden}. It may be noted that a wide range of communication models are accommodated by the framework presented herein. Further, we allow for time-varying network topologies and QoS, to account for real-world scenarios wherein physical wear-and-tear, introduction of new resources, etc., happen routinely.

\emph{In this paper, we are interested in the development of an analytical framework, and an associated algorithm, to solve the distributed optimization problem in MAS with unreliable communication, time-varying network topology and QoS. For this, among others, we develop simple verifiable practical assumptions on the wireless network, that ensure convergence of distributed delayed gradient based optimization algorithms.}

\textbf{Stochastic Global Objective.} In \eqref{usc_unreal}, the form of the stochastic objective function $F$ is given by: $$F(x) \coloneqq \mathbb{E}_{\xi} [ f(x, \xi) ],$$ where $f: \mathbb{R}^d \times \mathcal{S} \to \mathbb{R}$ is a real-valued function and  $\xi$ is an $\mathcal{S}$ valued random variable. Typically, $\xi$ is the randomness that arises due to environmental and network-related fluctuations. We shall broadly call $\xi$ as the \emph{network noise}. In this paper, we assume that all agents may only observe noisy samples of the stochastic objective, at any point in time. Note that stochasticity in the objective function arises due to the random variable $\xi$. Further, when two agents observe the function at the same time, the sample obtained is influenced by the same realization of $\xi$. In words, say that agent-i wishes to evaluate $\nabla_{x_i}F$ at $\tilde{x}_i$. At the same time agent-j, $j \neq i$, wishes to evaluate $\nabla_{x_j}F$ at $\tilde{x}_j$. Further, say that the sample realization of the network noise is $\xi_0$. Then, the agent-i partial-derivative is $\nabla_{x_i}F(\tilde{x}_i, \xi_0)$, while the agent-j one is $\nabla_{x_j}F(\tilde{x}_j, \xi_0)$. It must however be noted that the analysis presented herein can be readily extended to accommodate local fluctuations that may vary with the observing agent. The reader is referred to Remark~\ref{local_fluc} for a brief discussion on this.

\textbf{Autonomy and asynchronicity.} An important trait of MAS is the autonomy with which the agents involved operate. Agent-i performs the above partial derivative calculations at every \emph{time-step} to search for a local optimal in its decision space. For this, it requires estimates from other agents that may be delayed or lost on account of the unreliable communication network. Another important aspect of a MAS is that the agents are asynchronous, i.e., the agents may not synchronize their clocks. The implication of this is that the \emph{time-step}, previously mentioned, is with respect to the local clock of agent-i. However, convergence of any optimization algorithm operating in this setting requires some causal assumptions on the relative update frequency of the agents. For clarity, we carry out our analysis with respect to a \emph{hypothetical global clock}, which is faster than all agent clocks and the network clock. In particular, for every tick of the hypothetical clock the agent/network clocks tick at most once. Although delay was captured in \eqref{usc_unreal}, it does not account for asynchronicity. We therefore rewrite \eqref{usc_unreal} to accommodate for asynchronicity and to change notations for convenience.


\begin{multline}
\label{usc_real}
x^{n+1}_i = x^n _i - a(\nu(n, i)) I(i \in Y^n) \\ \left( \nabla_{x_i} f(x^{n - \tau_{1i}(n)} _1, \ldots, x^{n - \tau_{Di}(n)} _D, \xi^n)  + \epsilon^n _i \right),
\end{multline}
where \\
\textbf{(i)} $\xi^n$'s are i.i.d. $\mathcal{S}$-valued samples, due to the stochastic communication network. \\
\textbf{(ii)} $Y^n \subset \{1, 2, \ldots, D\}$ represents the subset of agents that update their estimates between times $n-1$ and $n$. Note that $n$ is the time index with respect to the hypothetical global clock and $Y^n$ accounts for asynchronicity by tracking agent updates. Say that the agent-i clock did not tick between $n-1$ and $n$ (no update of the local estimate), then $i \notin Y^n$, $I(i \in Y^n)=0$ and $x^{n+1}_i = x^n _i$.\\
\textbf{(iii)} $0 \le \tau_{ji}(n) \le n$ is the (stochastic) delay experienced by agent-i in receiving the agent-j estimate, at time $n$.\\
\textbf{(iv)} $\nu(n,i) = \sum \limits_{m=0}^{n} I(i \in Y^n)$ is the number of times that agent-i updated its local estimate until time $n$. For example, if agent-2 has updated its estimate $3$ times when the global clock has ticked $10$ times, then $\nu(10,2) = 3$.\\
\textbf{(v)} $\epsilon^n_i$ is a stochastic additive error term that arises due to sampling and/or gradient estimations. 

The reader may recognize \eqref{usc_real} as a distributed asynchronous sample-based noisy gradient descent scheme. The global clock indexed by $n$ is hypothetical and only used for analysis. In the following section, we present a quick convergence analysis of \eqref{usc_real}. Specifically, we present conditions on $\tau_{ij}(n)$, for $1 \le i,j \le D$ and $n \ge 0$, such that \eqref{usc_real} converges a minimum of $F$. Clearly, the delay random variables $\tau_{ij}(n)$ depend on the characteristics of the underlying communication network. Additionally, it depends on the ``communication protocol'' used (who communicates and when). 

Till now, we have not focused on the connecting wireless network, specifically on the graph associated with the MAS. In Section~\ref{sec_algo}, we use the theory developed in Section~\ref{sec_gen_analysis} to present sufficient conditions on the MAS-graph/wireless network, to ensure convergence of \eqref{usc_real}. \emph{This is one of the major contributions of this paper.} Hitherto requirements, in literature, on the MAS-graph are too strong and often impractical, see Section~\ref{sec_introduction}. We essentially show convergence under the mild assumption that, at any time, the wireless network must connect any two agents with positive probability, possibly with delays.

\textbf{Communication Protocol (CP).} In this paper, the set of rules governing the exchange of information, in a MAS, using the underlying wireless network is called the communication protocol. For example, the CP presented here is the following. At every tick of the network clock, agent-i forwards a list containing the latest available estimates from all agents to its neighbors in the MAS-graph. Put succinctly, it forwards $\hat{x}_{ji},\  1\le j \le D$, where $\hat{x}_{ji}$ is the agent-j estimate available with agent-i, that is possibly old, see Section~\ref{sec_algo} for details. In addition to network quality and topology, communication delays are also affected by the CP used. In general, the CP must be topology sensitive, since leveraging topological knowledge can greatly reduce delays and hasten the rate of convergence, of distributed optimization algorithms. In this paper, however, we present a topology agnostic CP, merely for the sake of simplicity and generality. In Section~\ref{sec_algo}, we only present assumptions on the network and all requirements on the CP used, are intrinsic.

In the following section, we present the sufficient conditions for convergence, analyze the behavior of \eqref{usc_real}, under them. In Section~\ref{sec_algo} we present a distributed stochastic optimization algorithm and an associated communication protocol. In Section~\ref{sec_empirical} we present experimental results for the algorithm presented. Finally, in Section~\ref{sec_cons} we present a consensus algorithm motivated by the theory developed in Sections~\ref{sec_gen_analysis}, complete with relevant empirical evidence.


\section{Analysis of \eqref{usc_real}} \label{sec_gen_analysis}
First, we rewrite \eqref{usc_real} in the following form to make it more conducive for analysis:
\begin{multline}
\label{asmp_real_alt}
x^{n+1}_i = x^n _i - a(\nu(n, i)) I(i \in Y^n) \\ ( \mathbb{E} \left[ \nabla_{x_i} f(x^{n - \tau_{1i}(n)} _1, \ldots, x^{n - \tau_{Di}(n)} _D, \xi^n) \mid \mathcal{F}^n \right] + \\ \epsilon^n _i + M_i ^{n+1} ),
\end{multline}
where \\
\textbf{(a)} $\mathcal{F}^0 \coloneqq  \sigma \langle x^0, \epsilon^0, Y^0 \rangle$ and $ \mathcal{F}^n \coloneqq \sigma \langle x^m, \epsilon^m, Y^m, \xi^{k} \mid \ m \le n, \ k < n \rangle$ for $n \ge 1$.
 \begin{flushleft} 
 \textbf{(b)} $M_i ^{n+1} \coloneqq \nabla_{x_i} f(x^{n - \tau_{1i}(n)} _1, \ldots, x^{n - \tau_{Di}(n)} _D, \xi^n)$   $-\mathbb{E} \left[ \nabla_{x_i} f(x^{n - \tau_{1i}(n)} _1, \ldots, x^{n - \tau_{Di}(n)} _D, \xi^n) \mid \mathcal{F}^n \right]$. 
\end{flushleft}

\vspace*{.5cm}

To see that \eqref{asmp_real_alt} and \eqref{usc_real} are equivalent, we observe that $\mathbb{E} \left[ \nabla_{x_i} f(x^{n - \tau_{1i}(n)} _1, \ldots, x^{n - \tau_{Di}(n)} _D, \xi^n) \mid \mathcal{F}^n \right] = \nabla_{x_i}F(x^{n - \tau_{1i}(n)} _1, \ldots, x^{n - \tau_{Di}(n)} _D, \xi^n)$ for $n \ge 0$ and $1 \le i \le D$. The aforementioned equality follows trivially from the definition of the filtration, $\{\mathcal{F}^n\}_{n \ge 0}$. As stated earlier, $\{ \xi^n \}_{n \ge 0}$ are $\mathcal{S}$-valued i.i.d. (independent and identically distributed) samples. They represent network and environment fluctuations. In Remark~\ref{local_fluc}, we briefly discuss the case where different agents are affected by different local network fluctuations, i.e., agent-i is affected by $\xi_i ^n$ at time $n$.
\subsection{Assumptions on \eqref{usc_real}} \label{sec_asmp}
\begin{itemize}
 \item[{\bf (A1)}] 
 \begin{itemize}
 \item[(i)] {\it $\nabla_x f$ is continuous.}
 \item[(ii)] {\it It is locally Lipschitz continuous in the $x$-coordinate and \emph{may change} with the second coordinate.}
 \end{itemize}
 \item[{\bf (A2)}] {\it The step-size sequence $\{a(n)\}_{n \ge 0}$ satisfies the following conditions:}
 \begin{itemize}
  \item[(i)] {\it $\sum \limits_{n \ge 0} a(n) = \infty$ and $\sum \limits_{n \ge 0} a(n) ^2 < \infty$.}
  \item[(ii)] {\it $\limsup \limits_{n \to \infty} \sup \limits_{y \in [x, 1]} \frac{a( \lfloor yn \rfloor)}{a(n)} < \infty$
  for $0 < x \le 1$.}
  \item[(iii)] $\sup \limits_{n \ge 0} a(n) \le 1$.
 \item[(iv)] For $m \le n$, we have $a(n) \le \kappa a(m)$, where $\kappa > 0$.
 \item[(v)] There exists $1/2 < \eta < 1$ and a non-negative integer-valued random variable $\overline{\tau}$ such that:
	    \begin{itemize}
	     \item[(a)] $a(n) = o(n ^{- \eta})$.
	     \item[(b)] $\overline{\tau}$ stochastically dominates all $\tau_{kl}(n)$ and satisfies
	     \[
	      E\left[ \overline{\tau}^{1/\eta} \right] < \infty.
	     \]
	      \end{itemize}
 \end{itemize}
 \item[{\bf (A3)}] {\it $\sup \limits_{n \ge 0} \ \lVert x^n \rVert < \infty$ a.s.}
 \item[{\bf (A4)}] {\it $\liminf \limits_{n \to \infty} \frac{\nu(n, i)}{n} > 0$ for $1 \le i \le D$.}
  \item[{\bf (A5)}] {\it Almost surely, $\limsup \limits_{n \to \infty} \ \lVert \epsilon^n \rVert \le \epsilon$ for some fixed $\epsilon > 0$.}
\end{itemize}

\subsection{Remarks on the assumptions}
\begin{remark}
Let us say that we are given a distributed delayed gradient-based iterative optimization algorithm. We first check whether: \textbf{(i)} the local updates of this algorithm are described by \eqref{usc_real} \textbf{(ii)} the information delays due to the associated communication protocol and network delays/losses satisfy (A2)(v). If these two conditions are satisfied, then the analyses associated with \eqref{usc_real} can be applied to understand its behavior. The other assumptions are typically satisfied in many real-world scenarios. In Section~\ref{sec_algo}, we refine (A2)(v) to obtain a practical verifiable condition (A6). This refinement encompasses the requirements on the network topology and QoS, and CP.
\end{remark}
\begin{remark}
\label{asmp_r2}
Assumption (A5) requires that the stochastic additive error terms be norm-bounded asymptotically. These error terms may arise due to sampling or due to the use of approximate gradients instead of exact ones. Let us suppose that some or all $\nabla_{x_i} f \left(x^{n - \tau_{1i}(n)} _1, \ldots, x^{n - \tau_{Di}(n)} _D, \xi^n \right)$, $1 \le i \le D$, are unavailable or cannot be easily computed. Further, let us suppose that each agent may sample the objective function. Then a SPSA-C style \cite{ramaswamy2018analysis} gradient approximator may be used in its stead. This is given by $$\frac{f \left( \cdotp+ \Delta, \xi^n \right) - f \left(\cdotp - \Delta, \xi^n \right)}{2c \Delta_i},$$ where $0 < c < \infty$ is a perturbation parameter, and $\Delta$ is a d-dimensional random perturbation vector with  $i^{th}$ component $\Delta_i$. Note that since $\tau_{ii}(n)$ is assumed to be zero without loss of generality, $x^n _i = x^{n - \tau_{ii}(n)} _i$. The approximation errors associated with using this gradient estimator are shown to be in $o(c^2)$, see \cite{ramaswamy2018analysis} for details. Further, these errors are captured by the $\epsilon$ sequence, and satisfy (A5). In particular, the analysis and the algorithms presented can be readily extended to approximate gradient methods.
\end{remark} 
\begin{remark}
\label{local_fluc}
In \eqref{usc_real}, each agent is affected by the same realization of network fluctuations. In reality, however, network fluctuations may be local and vary from agent to agent. Hence, $\xi^n$ in \eqref{usc_real} must be replaced by $\xi^n _i$ (specific to agent-i). Let us assume that $\{\xi^n_i\}_{n \ge 0, 1\le i \le D}$ are independent. Then, we may define $\mathcal{F}^0 \coloneqq \sigma \langle x^0, \xi^0, Y^0 \rangle$ and $\mathcal{F}^n \coloneqq \sigma \left\langle x^m, \xi^m, Y^m, \xi^k_i \mid m \le n, k<n, 1 \le i \le D \right\rangle$, $n \ge 1$. Using this newly defined filtration, we can show that
\begin{multline*} \nabla_{x_i} F(x^{n - \tau_{1i}(n)} _1, \ldots, x^{n - \tau_{Di}(n)} _D, \xi^n) =  \\ \mathbb{E} \left[ \nabla_{x_i} f(x^{n - \tau_{1i}(n)} _1, \ldots, x^{n - \tau_{Di}(n)} _D, \xi^n) \mid \mathcal{F}^n \right]. \end{multline*}
The analysis that follows will now hold with minor modifications.
\end{remark}
\subsection{Preliminaries and lemmata}
The assumptions (A1)--(A5) are motivated by the theory developed by \cite{ramaswamy2018asynchronous}. To apply the theory from  \cite{ramaswamy2018asynchronous}, for the analysis of \eqref{usc_real}, we present a couple of technical lemmata. Lemma~\ref{asmp_L} is needed, since unlike \cite{ramaswamy2018asynchronous}, we allow for objectives such that $\nabla_x f$ is locally Lipschitz continuous in the $x$-coordinate. This is because local Lipschitz continuity is a property that is easy to verify. Further, the constant associated with every $\hat{x} \in \mathbb{R}^d$ may vary with $\xi$. Through this lemma, we guarantee the existence of a global Lipschitz constant, that does not vary with $x$ or $\xi$, under the following restrictions: $x$ belongs to a compact convex subset $\mathcal{K} \subset \mathbb{R}^d$ (restrict the decision search space to a sample path dependent compact convex subset), and $\mathcal{S}$ is such that some one-point-compactification theorem is applicable. Note that we state the theorem for compact $\mathcal{S}$  and discuss how its proof can be extended to general $\mathcal{S}$ using one-point-compactification theorems. Lemma~\ref{asmp_noise_conv} shows that the errors arising from the use of gradient samples, as opposed to expected gradients, vanish asymptotically.

To apply the theory from \cite{ramaswamy2018asynchronous}, we rewrite \eqref{asmp_real_alt} in the following form: 

\begin{multline}
\label{asmp_cons}
x^{n+1} = x^n -  \overline{a}(n) \lambda^n \\ \left(\nabla_{x} F(x^{n - \tau_{1i}(n)} _1, \ldots, x^{n - \tau_{Di}(n)} _D, \xi^n)  + \epsilon^n + M ^{n+1} \right),
\end{multline}
\\ where $\overline{a}(n) := \underset{i \in Y^n}{\max} \ a(\nu(n, i))$, 
$q(n, i) := \frac{a(\nu(n, i)) I(i \in Y^n)}{\overline{a}(n)}$ and
\[
\lambda^n := 
\left[ \begin{array}{rrr}
\mbox{\LARGE $Q_1^n$} & \dots & \mbox{\LARGE $0$} \\
\vdots & \ddots & \vdots \\
\mbox{\LARGE $0$} & \dots & \mbox{\LARGE $Q_D^n$}
\end{array} \right]_{d \times d}, \text{ with}
\]
\[
Q^n_i := \left[ \begin{array}{rrr}
q(n,i) & \dots & 0 \\
\vdots & \ddots & \vdots \\
0 & \dots & q(n,i)
\end{array} \right]_{d_i \times d_i} .
\]
Recall that there are $D$ agents in the system, the dimension of agent-i is $d_i$ and $\sum \limits_{i = 1}^D d_i = d$. For $1 \le i \le d$ and $n \ge 0$, $Q^n_i$ is a diagonal matrix of dimension $d_i \times d_i$, such that every diagonal element equals $q(n, i)$. For $n \ge 0$, $\lambda^n$ is a $d \times d$ diagonal matrix obtained by placing the $Q_i^n$ matrices along its diagonal. The term $q(n,i)$ can be interpreted as the step-size of agent-i at time $n$, in relation to the maximum step-size of an active agent $\overline{a}(n)$. The $\lambda^n$ matrix is used to capture the transient relative update of various agents with respect to $\overline{a}(n)$. The various agent updates are related to each other via this maximum step-size. To analyze the discrete-time algorithm \eqref{usc_real}, we use $\{\overline{a}(n)\}_{n \ge 0}$ to divide the time axis, and obtain a corresponding trajectory in the ``continuous domain''. Here, the ``limit'' of $\lambda^n$ captures the asymptotic relative update frequency of the agents.
\begin{lemma}
\label{asmp_L}
Given a compact convex set $\mathcal{K} \subset \mathbb{R}^d$, there exists $0< L< \infty $ such that $\lVert \nabla_x f(y, \xi) - \nabla_x f(z, \xi) \rVert \le L \lVert x - y \rVert$ $\forall \ x,y \in \mathcal{K}$ and $\xi \in \mathcal{S}$. Further, if $\mathcal{S}$ is compact, then $L$ is independent of $\xi$, although may depend on $\mathcal{K}$.
\end{lemma}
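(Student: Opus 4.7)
The plan is to first establish the Lipschitz bound on $\mathcal{K}$ for each fixed $\xi$ by a covering-plus-telescoping argument, and then to promote the constant to be independent of $\xi$ by lifting the cover to the joint space $\mathcal{K}\times\mathcal{S}$. For fixed $\xi$, assumption (A1)(ii) supplies, at every $x_0\in\mathcal{K}$, a ball $B(x_0,r_{x_0,\xi})$ and a constant $L_{x_0,\xi}<\infty$ such that $\nabla_x f(\cdot,\xi)$ is $L_{x_0,\xi}$-Lipschitz on that ball. Compactness of $\mathcal{K}$ extracts a finite subcover; I set $L(\xi)$ to be the maximum of the associated constants and let $r(\xi)>0$ be a Lebesgue number of the subcover. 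For $y,z\in\mathcal{K}$ with $\|y-z\|<r(\xi)$, both points lie in a common covering ball and the Lipschitz bound follows directly. For larger separations I exploit convexity of $\mathcal{K}$: the segment $[y,z]$ stays in $\mathcal{K}$, I partition it into $\lceil \|y-z\|/r(\xi)\rceil$ pieces of length at most $r(\xi)$, and the per-piece Lipschitz increments telescope to $L(\xi)\|y-z\|$. This establishes the first (pointwise-in-$\xi$) assertion.

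For the uniform-in-$\xi$ claim with $\mathcal{S}$ compact, I would replicate the covering strategy jointly on $\mathcal{K}\times\mathcal{S}$. Reading (A1)(i)--(ii) together, one shows that at each $(x_0,\xi_0)$ there exist $r>0$, a neighborhood $U_{\xi_0}$ of $\xi_0$, and a constant $L_{x_0,\xi_0}<\infty$ such that $\nabla_x f(\cdot,\xi)$ is $L_{x_0,\xi_0}$-Lipschitz on $B(x_0,r)$ for every $\xi\in U_{\xi_0}$. Compactness of $\mathcal{K}\times\mathcal{S}$ then delivers a finite subcover with a uniform constant $L_0$ and uniform Lebesgue number $r_0$; the same segmentation as above, now with $(L_0,r_0)$ in place of $(L(\xi),r(\xi))$, yields a Lipschitz constant depending only on $\mathcal{K}$ (and $\mathcal{S}$), independent of $\xi$.

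The main obstacle is precisely the passage from the pointwise-in-$\xi$ statement in (A1)(ii) to the joint-in-$(x_0,\xi_0)$ form used above. I expect to justify this by an upper-semicontinuity argument: the best local Lipschitz constant at $x_0$, as a function of $\xi$, is a supremum of the continuous (by (A1)(i)) difference-quotient maps $\xi\mapsto \|\nabla_x f(x,\xi)-\nabla_x f(y,\xi)\|/\|x-y\|$; to rule out blow-up along sequences $\xi_n\to\xi_0$ with $x_n,y_n\to x^*$ and $\|x_n-y_n\|\to 0$, one uses the local Lipschitz property of $\nabla_x f(\cdot,\xi_0)$ near $x^*$ together with joint continuity on the compact product $\overline{B(x^*,r)}\times\mathcal{S}$. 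Finally, for non-compact $\mathcal{S}$ I would embed $\mathcal{S}$ in its one-point compactification $\mathcal{S}^{+}=\mathcal{S}\cup\{\infty\}$, extend $\nabla_x f$ continuously to the added point (which is exactly the content of the paper's ``one-point compactification theorem being applicable''), and apply the compact-$\mathcal{S}$ argument to $\mathcal{S}^{+}$; the constant obtained depends only on $\mathcal{K}$ and serves for every $\xi\in\mathcal{S}$.
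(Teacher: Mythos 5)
Your first part (fixed $\xi$) is correct and is essentially the paper's own argument: a finite subcover of $\mathcal{K}$ obtained from (A1)(ii) plus compactness, followed by chaining along the segment $[y,z]\subset\mathcal{K}$ supplied by convexity. Your Lebesgue-number formulation is in fact slightly cleaner than the paper's chain of half-radius balls, since the telescoping over collinear points gives the bound $L(\xi)\lVert y-z\rVert$ directly rather than the paper's $M\hat{L}\lVert y^1-y^2\rVert$. Your closing remark on non-compact $\mathcal{S}$ via one-point compactification also matches the paper's discussion (with the same caveat, which you flag, that continuous extendability to the added point is an assumption, not a theorem).

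For the uniform-in-$\xi$ claim you take a genuinely different route from the paper: the paper defines the optimal constant $L(\xi)$ on $\mathcal{K}$, argues that $\xi\mapsto L(\xi)$ is continuous, and then maximizes over the compact $\mathcal{S}$, whereas you try to cover $\mathcal{K}\times\mathcal{S}$ by product neighborhoods $B(x_0,r)\times U_{\xi_0}$ carrying a single constant. You correctly identify the lifting from per-$\xi$ to locally-uniform-in-$\xi$ Lipschitzness as the main obstacle, but the justification you sketch does not close it: joint continuity of $\nabla_x f$ controls the numerator $\lVert \nabla_x f(x_n,\xi_n)-\nabla_x f(y_n,\xi_n)\rVert$, not the difference quotient when $\lVert x_n-y_n\rVert\to 0$, and the local Lipschitz property of $\nabla_x f(\cdot,\xi_0)$ at the limiting parameter says nothing about nearby $\xi_n$. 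Concretely, with $d=1$, $\mathcal{K}=[0,1]$, $\mathcal{S}=[-1,1]$, the choice $\nabla_x f(x,\xi)=\xi\sin(x/\xi^2)$ for $\xi\neq 0$ and $\nabla_x f(x,0)\equiv 0$ satisfies (A1)(i)--(ii) (jointly continuous; locally Lipschitz in $x$ for each fixed $\xi$), yet the best Lipschitz constant on $\mathcal{K}$ is of order $1/\lvert\xi\rvert$, so no product neighborhood of $(x^*,0)$ with a finite uniform constant exists and no finite $\xi$-independent $L$ exists at all; your proposed upper-semicontinuity argument therefore fails at exactly this step, and it cannot be repaired without strengthening the hypotheses (e.g., local Lipschitzness of $\nabla_x f$ jointly in $(x,\xi)$, or assuming upper semicontinuity of $\xi\mapsto L(\xi)$ outright). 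To be fair, the paper's own continuity proof hinges on the same hidden uniformity — its minimality argument treats the witness pair $(\tilde y,\tilde z)$ as if it did not depend on $n$, and in the example above these witnesses collapse onto each other — so the difficulty you encountered is intrinsic to the lemma under (A1) alone; still, as written your proposal leaves the second claim unproven.
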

\begin{proof}
From $(A1)(ii)$ we get for every $z \in \mathcal{K}$ and $\xi \in \mathcal{S}$, there exists $\infty> r(z) > 0$ and $\infty > L(z, \xi) > 0$ such that $\lVert \nabla_x f(y, \xi) - \nabla_x f(z, \xi) \rVert \le L(z, \xi) \lVert y - z \rVert$ for every $y \in B_{ r(z)}(z)$ and $\xi \in \mathcal{S}$. Without loss of generality, $L(z, \xi)$ is such that $\lVert \nabla_x f(y^1, \xi) - \nabla_x f(y^2, \xi) \rVert \le L(z, \xi) \lVert y^1 - y^2 \rVert$ for every $y^1, y^2 \in B_{ r(z)}(z)$. Since $\mathcal{K}$ is compact, the open cover $\{B_{r(z)/2}(z) \mid z \in \mathcal{K}\}$ has a finite sub-cover given by $\mathcal{S}_c \coloneqq \{B_{r(z^j)/2}(z^j) \mid 1 \le j \le M, \ z^j \in \mathcal{K}\}$. Further, define $\hat{L}:= \max \limits_{1 \le j \le M} L(z^j, \xi)$.

To prove the statement of the lemma, we consider some $y^1, y^2 \in \mathcal{K}$. It follows from the convexity assumption that $y^1$ and $y^2$ are connected by a straight line $\ell \subset \mathcal{K}$. It follows from the discussion above, that we can find a sequence of sets, $B_{r(z^{j_1})}(z^{j_1}), \ldots, B_{r(z^{j_m})}(z^{j_m})$ from $\mathcal{S}_c$, covering the aforementioned line $\ell$, such that $m \le M$, $y^1 \in B_{r(z^{j_1})}(z^{j_1})$, $y^2 \in B_{r(z^{j_m})}(z^{j_m})$, and $B_{r(z^{j_i})}(z^{j_i}) \cap B_{r(z^{j_{i+1}})}(z^{j_{i+1}}) \neq \emptyset$ for $1 \le i \le m-1$. Now, we construct a sequence $z^0, \ldots, z^{m}$ such that $z^0 = y^1$, $z^{m} = y^2$ and $z^i \in B_{r(z^{j_i})}(z^{j_i}) \cap B_{r(z^{j_{i+1}})}(z^{j_{i+1}}) \cap \ell$, $1 \le i \le m-1$. Using this sequence, we get the following:
\[
\lVert \nabla_x f(y^1, \xi) - \nabla_x f(y^2, \xi) \rVert \le \sum \limits_{i = 0}^{m-1} \lVert \nabla_x f(z^{i+1}, \xi) - \nabla_x f(z^{i}, \xi) \rVert,
\]
\[
\lVert \nabla_x f(y^1, \xi) - \nabla_x f(y^2, \xi) \rVert \le \sum \limits_{i = 0}^{m-1} \hat{L} \lVert z^{i+1} - z^{i} \rVert \text{and}
\]
\[
\lVert \nabla_x f(y^1, \xi) - \nabla_x f(y^2, \xi) \rVert \le m \hat{L} \lVert y^1 - y^2 \rVert.
\]
If we define $\tilde{L} \coloneqq M \hat{L}$, then it may depend on both $\mathcal{K}$ and $\xi$. 

From the above discussions, we can find a minimum $L(\xi)$ such that 
\[
\lVert \nabla_x f(y^1, \xi) - \nabla_x f(y^2, \xi) \rVert \le L(\xi) \lVert y^1 - y^2 \rVert
\]
holds for every $y^1, y^2 \in \mathcal{K}$ with $L(\xi) < \infty$ for all $\xi \in \mathcal{S}$. Note that $L(\xi)$ depends on $\mathcal{K}$, but this dependency is omitted in the notation for the sake of clarity in presentation.

It is now left to show that there exists a constant that depends only on $\mathcal{K}$ and not on $\xi$. To do this, we show that the map $\xi \mapsto L(\xi)$ is continuous. Consequently, we may choose $\sup \limits_{\xi \in \mathcal{S}} L(\xi)$ as the $\xi$-independent Lipschitz constant.
First, we need to show that it satisfies: $\liminf \limits_{\xi^n \to \xi} L(\xi^n) = L(\xi)$. To see this, observe that:
\begin{equation}\label{lemma1_eq1}
\lVert \nabla_x f(y^1, \xi) - \nabla_x f(y^2, \xi) \rVert \le \lVert \nabla_x f(y^1, \xi^n) - \nabla_x f(y^2, \xi^n) \rVert + \epsilon^n,
\end{equation}
where $\epsilon^n = \lVert \nabla_x f(y^1, \xi) - \nabla_x f(y^1, \xi^n) \rVert + \lVert \nabla_x f(y^2, \xi) - \nabla_x f(y^2, \xi^n) \rVert$. Clearly $\lim \limits_{\xi^n \to \xi} \epsilon^n = 0$, additionally recall that $\lVert \nabla_x f(y^1, \xi^n) - \nabla_x f(y^2, \xi^n) \rVert \le L(\xi^n) \lVert y^1 - y^2 \rVert$. Now, taking $\liminf$ on both sides of \eqref{lemma1_eq1}, we get
\[
\lVert \nabla_x f(y^1, \xi) - \nabla_x f(y^2, \xi) \rVert \le \liminf \limits_{\xi^n \to \xi} L(\xi^n) \lVert y^1 - y^2 \rVert.
\]
It follows from the minimality of $L(\xi)$ that $L(\xi) \le \liminf \limits_{\xi^n \to \xi} L(\xi^n)$. It is left to show that $L(\xi) \ge \liminf \limits_{\xi^n \to \xi} L(\xi^n)$. Let us assume the contrary. Then, there exists $\delta_0 > 0$ such that $L(\xi) + \delta_0 < \liminf \limits_{\xi^n \to \xi} L(\xi^n)$. Hence there exists $N$ such that $\forall \ n \ge N$ $L(\xi) + \delta_0 < L(\xi^n)$. It follows from the minimality of $L(\xi^n)$ that there exists $\tilde{y}, \tilde{z} \in \mathcal{K}$ such that $\lVert \nabla_x f(\tilde{y}, \xi^n) - \nabla_x f(\tilde{z}, \xi^n) \rVert > (L(\xi^n) - \delta_0/2) \lVert \tilde{y} - \tilde{z}\rVert $. Taking $\liminf \limits_{\xi^n \to \xi}$ on both sides, we get $\lVert \nabla_x f(\tilde{y}, \xi) - \nabla_x f(\tilde{z}, \xi) \rVert > (L(\xi) - \delta_0/2) \lVert \tilde{y} - \tilde{z}\rVert $. This contradicts the minimality of $L(\xi)$. Hence, $\liminf \limits_{\xi^n \to \xi} L(\xi^n) = L(\xi)$. 

We make a further claim that $\lim \limits_{\xi^n \to \xi} L(\xi^n)$ exists and equals $L(\xi)$. For this, we show that every sub-sequence of $\{L(\xi^n)\}_{n \ge 0}$, such that $\xi^n \to \xi$, has a further subsequence whose limit is $L(\xi)$. Given a subsequence $\{L(\xi^{m(n)})\}_{n \ge 0}$, $\{m(n)\}_{n \ge 0} \subset \{n \ge 0\}$, it follows from the above discussion that $\liminf \limits_{\xi^{m(n)} \to \xi} L(\xi^{m(n)}) = L(\xi)$. Hence it has a further subsequence that converges to $L(\xi)$. In other words, the map $\xi \mapsto L(\xi)$ is continuous.

Now define $L := \max \limits_{\xi \in \mathcal{S}} L(\xi)$. It follows from the continuity of the $\xi \mapsto L(\xi)$ map, that $L < \infty$. The statement of the lemma is now immediate.
\end{proof}
The above lemma states that the local Lipschitz property of $\nabla_x f$ is global provided we restrict the decision space to compact convex subsets of $\mathbb{R}^d$. For clarity in presentation, we assume that $\mathcal{S}$ is compact. In addition to being realistic, we do not lose any generality by making this assumption. This is because one can use one-point compactification theorems when $\mathcal{S} = \mathbb{R}^n$. 

Define $\zeta^n := \sum \limits_{m=0}^{n-1} \overline{a}(m) \lambda^m M^{m+1}$. The sequence $\{\zeta^n\}_{n \ge 0}$ must be considered due to the use of samples instead of expected values. We will show that $\{(\zeta^n, \mathcal{F}^n )\}_{n \ge 0}$ is a zero-mean square integrable Martingale difference sequence, and use the Martingale convergence theorem to show that it converges almost surely. This is important, since this implies that the asymptotic behavior of \eqref{asmp_cons} is identical to the version that uses expected values.
\begin{lemma} \label{asmp_noise_conv}
Almost surely $\lim \limits_{n \to \infty} \zeta^n$ exists, where $\zeta^n = \sum \limits_{m=0}^{n-1} \overline{a}(m) \lambda^m M^{m+1}$ for $n \ge 0$.
\end{lemma}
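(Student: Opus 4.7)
The plan is to recognize $\{(\zeta^n,\mathcal{F}^n)\}_{n\ge 0}$ as a (locally) square-integrable martingale, bound its conditional quadratic variation $\sum_m \mathbb{E}\bigl[\lVert \overline{a}(m)\lambda^m M^{m+1}\rVert^2 \mid \mathcal{F}^m\bigr]$ almost surely, and then invoke the martingale convergence theorem.

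The martingale property is immediate from the construction. The quantities $Y^m$ and $\nu(m,\cdot)$ depend only on events up to time $m$, so $\overline{a}(m)$ and $\lambda^m$ are $\mathcal{F}^m$-measurable; meanwhile $M^{m+1}$ satisfies $\mathbb{E}[M^{m+1}\mid\mathcal{F}^m]=0$ by definition. Hence $\mathbb{E}[\zeta^{n+1}-\zeta^n \mid \mathcal{F}^n]=0$, so $\{(\zeta^n,\mathcal{F}^n)\}$ is a martingale.

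For the quadratic-variation bound I would proceed in two steps. By (A3), along a.e.\ sample path the iterates $\{x^n\}$ lie in a random compact ball; let $\mathcal{K}=\mathcal{K}(\omega)$ be its closed convex hull. Applying Lemma~\ref{asmp_L} with $\mathcal{S}$ compact gives a single Lipschitz constant $L(\omega)<\infty$ for $\nabla_x f(\cdot,\xi)$ on $\mathcal{K}$ uniformly in $\xi$, so that $\sup_{y\in\mathcal{K},\,\xi\in\mathcal{S}}\lVert\nabla_x f(y,\xi)\rVert<\infty$ almost surely. Consequently $\lVert M^{m+1}\rVert\le C(\omega)$ for all $m$, and since $\lambda^m$ is diagonal with entries in $[0,1]$ we have $\lVert\lambda^m\rVert\le 1$, giving $\lVert \overline{a}(m)\lambda^m M^{m+1}\rVert^2 \le C(\omega)^2\,\overline{a}(m)^2$. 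To bound these scalars, I would use (A4) to obtain $c>0$ with $\nu(m,i)/m\ge c$ for all $i$ and all large $m$; then (A2)(ii) applied with $x=c$ yields $a(\nu(m,i))\le C' a(m)$ uniformly in $i$ for all sufficiently large $m$, hence $\overline{a}(m)\le C' a(m)$ eventually. Combined with (A2)(i) this gives $\sum_m \overline{a}(m)^2<\infty$ almost surely, and therefore $\sum_m \mathbb{E}\bigl[\lVert \overline{a}(m)\lambda^m M^{m+1}\rVert^2 \mid \mathcal{F}^m\bigr]<\infty$ almost surely.

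The main technical obstacle is that both the gradient bound $C(\omega)$ and the constant from (A4) are sample-path dependent, so the bound above is only almost sure rather than uniform in $\omega$. I would therefore localise through stopping times $T_K:=\inf\{n:\lVert x^n\rVert>K\}$: on $\{T_K=\infty\}$ the iterates lie in a fixed deterministic ball, the stopped process $\zeta^{n\wedge T_K}$ is an $L^2$-bounded martingale, and the classical martingale convergence theorem gives almost sure convergence of $\zeta^{n\wedge T_K}$. Since $\bigcup_{K\ge 1}\{T_K=\infty\}$ has full probability by (A3), taking the union over $K$ concludes that $\lim_{n\to\infty}\zeta^n$ exists almost surely.
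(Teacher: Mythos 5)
Your proposal is correct and follows essentially the same route as the paper's proof: identify $\{(\zeta^n,\mathcal{F}^n)\}_{n\ge 0}$ as a zero-mean square-integrable martingale, bound $\lVert M^{m+1}\rVert$ along almost every sample path using (A3), Lemma~\ref{asmp_L} and the compactness of $\mathcal{S}$, observe that $\sum_m \overline{a}(m)^2<\infty$ and that the diagonal entries of $\lambda^m$ lie in $[0,1]$, and conclude via the martingale convergence theorem. Your stopping-time localisation is just a more explicit treatment of the sample-path-dependent constants, which the paper handles implicitly.
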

\begin{proof}
To prove the above statement, we first show that
$\{(\zeta^n, \mathcal{F}^n )\}_{n \ge 0}$ is a zero-mean square integrable Martingale difference sequence. Next, we show that its associated quadratic variation process is bounded almost surely. The convergence of $\zeta^n$ is then a direct consequence of the Martingale convergence theorem, \cite{durrett2019probability}. Before proceeding, we make a few important observations. First, we note that it follows from $(A3)$, that $x^n \in B_R(0)$ for some sample path dependent $R < \infty$. Using Lemma~\ref{asmp_L} we can associate $L$ with $B_R(0)$. Hence, for $z \in B_R(0)$, we get:
\[
\lVert \nabla_x f(z, \xi) - \nabla_x f(0, \xi) \rVert \le L \lVert z \rVert, \text{ and}
\]
\[
\lVert \nabla_x f(z, \xi) \rVert \le \rVert \nabla_x f(0, \xi) \rVert + L \lVert z \rVert.
\]
Since $\mathcal{S}$ is compact, we get that 
\begin{equation}
\label{asmp_K}
\lVert \nabla_x f(z, \xi) \rVert \le K (1+  \lVert z \rVert), 
\end{equation}
where $K < \infty$ is sample path dependent. For our second observation, we note that $\sum \limits_{n \ge 0} \overline{a}(n)^2 < \infty$ as a consequence of $(A2)(i) $ and $(A4)$. Finally, we observe that the diagonal entries in $\lambda^n$ are all less than $1$. 

It follows from the definition of $M_{n+1}$ that $\mathbb{E} \zeta^n = 0$. It follows from the above made observations and \eqref{asmp_K} that $\mathbb{E} \lVert \zeta^n \rVert^2 < \infty$ a.s. Hence, we have that $\zeta^n$ is a zero-mean square integrable Martingale difference sequence with filtration $\mathcal{F}_n$. To show that the associated quadratic variation process is bounded, it is enough to show that 
\[
\sum \limits_{n \ge 0} \lVert \zeta^{n+1} - \zeta^n \rVert ^2 < \infty \text{ a.s.}
\]
But this is a direct consequence of the above made observations. Hence, it follows from the Martingale Convergence Theorem that $\lim \limits_{n \to \infty} \zeta^n$ exists a.s.
\end{proof}

An important component of our analysis is the verifiability of our assumptions. There are three of them whose demonstrability is unclear: $(A1)(ii)$, $(A2)(v)$ and $(A3)$. We clarify the verifiability of $(A1)(ii)$ in the following section. We defer the discussion on the verifiability of $(A2)(v)$ till the end Section~\ref{sec_algo}. This is because, we require the exact assumption on the network topology and quality, for this discussion. Finally, note that we do not discuss the verifiability of $(A3)$. The readers may however refer to Section 5 of \cite{ramaswamy2018asynchronous} for verifiable sufficient conditions for $(A3)$.

\subsection{On the verifiability of $(A1)(ii)$} \label{sec_verify}
Assumption (A1)(ii) requires that $\nabla_x f$ be locally Lipschitz continuous in the $x$-coordinate. This is a gross weakening of similar ones found in literature that require global Lipschitz property, see for e.g., \cite{borkar2006stochastic}. Since twice continuous differentiability is a sufficient condition for the local Lipschitz property, (A1)(ii) is easily verifiable. Additionally, in literature, the global Lipschitz constant is assumed to be independent of the second coordinate ($\xi$ coordinate). However, (A2)(ii) allows for the local Lipschitz constant to vary with $\xi$. 

On the surface, it seems surprising that (A1)(ii) is able to allow for a two-fold weakening, without compromising verifiability. Typical assumptions in literature do not exhibit these two traits simultaneously. For (A1)(ii), this is not surprising in the light of Lemma~\ref{asmp_L}. In particular, its proof leverages two ideas. First, it uses the stability assumption (A3) to move from local to global Lipschitzness, by restricting the set of interest, to a sample path dependent compact convex set obtained from (A3). Second, it eliminates the dependency on $\xi$ by assuming, without loss of generality, that $\mathcal{S}$ is compact. This is done because $\mathcal{S}$ is typically $\mathbb{R}^n$, $n \ge 1$, or more generally a locally compact space. In this case, one uses Alexandroff's one point compactification theorem \cite{bredon2013topology} to find a homeomorphic map into a subspace $\mathcal{S}_{*}$, of a compact space. Also, $f$ is easily extended to be viewed as a map from $\mathbb{R}^d \times \mathcal{S}_{*}$. Then, instead of viewing samples as arising in $\mathcal{S}$, one uses corresponding samples in $\mathcal{S}_{*}$. Now that $\mathcal{S}_{*}$ is compact, the statement of Lemma~\ref{asmp_L} holds.

\subsection{Convergence Analysis of \eqref{usc_real}} \label{sec_ana}
We are finally ready to analyze \eqref{usc_real}. As stated earlier, our assumptions and lemmata are developed and presented so that the theory from \cite{ramaswamy2018asynchronous} can be used in the analysis. The reader must note that the notations used in this section are consistent those in \cite{ramaswamy2018asynchronous}. To avoid redundancies, we do not repeat overlapping proofs and refer the reader to corresponding ones from \cite{ramaswamy2018asynchronous}.



At time $n$, agent-i performs gradient descent in its local decision space using $(x_1 ^{n - \tau_{1i}(n)}, \ldots, x_D ^{n - \tau_{Di}(n)})$, where $x_j ^{n - \tau_{ji}(n)}$ is the latest agent-j estimate that is available with agent-i at time $n$. The error incurred at time $n$ with respect to agent-i, $e^n_i$, is given by:
\begin{multline*}
e^n_i \coloneqq a(\nu(n, i)) I(i \in Y^n) \times \\ \lVert \nabla_x f(x_1 ^n, \ldots, x_D ^n, \xi^n) -  f(x_1 ^{n - \tau_{1i}(n)}, \ldots, x_D ^{n - \tau_{Di}(n)}, \xi^n)\rVert.
\end{multline*}
It follows from Lemma~\ref{asmp_L} that $\nabla_x f$ is globally Lipschitz continuous when restricted to $\overline{B}_R(0)$, where $x^n \in \overline{B}_R(0)$ for $n \ge 0$, and $0 < R < \infty$ is obtained from $(A3)$. Hence,
\begin{multline} \label{ana_eq1}
e^n_i \le a(\nu(n, i)) L \lVert x^n - (x_1 ^{n - \tau_{1i}(n)}, \ldots, x_D ^{n - \tau_{Di}(n)}) \rVert.
\end{multline}
To bound the right hand side of the above equation, it is enough to consider $\lVert x_j ^n - x_j ^{n - \tau_{ji}(n)} \rVert$, $1 \le j \le D$, separately. Using assumptions $(A3)$, $(A2)(iv)$ we get that almost surely
\begin{multline*}
\lVert x_j ^n - x_j ^{n - \tau_{ji}(n)} \rVert \le \sum \limits_{m = n - \tau_{ji}(n)}^{n-1} \lVert x_j ^{m+1} - x_j ^m \rVert \le C \tau_{ji}(n),
\end{multline*}
for some sample path dependent $C < \infty$. To proceed with the analysis, we need the following technical lemma.
\begin{lemma} \label{ana_BC}
$P(\tau_{ji}(n) > n^\eta \ i.o.) = 0$.
\end{lemma}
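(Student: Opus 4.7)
The plan is to apply the first Borel--Cantelli lemma. It suffices to show that
\[
\sum_{n \ge 0} P(\tau_{ji}(n) > n^\eta) < \infty,
\]
and then $P(\tau_{ji}(n) > n^\eta \text{ i.o.}) = 0$ follows immediately.

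First, I would invoke the stochastic domination part of $(A2)(v)(b)$: since $\overline{\tau}$ stochastically dominates $\tau_{ji}(n)$ for every $n$, we have
\[
P(\tau_{ji}(n) > n^\eta) \le P(\overline{\tau} > n^\eta) = P(\overline{\tau}^{1/\eta} > n),
\]
where the equality uses that $\eta > 0$ and the map $t \mapsto t^{1/\eta}$ is strictly increasing on $[0, \infty)$.

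Next, I would use the standard identity relating the expectation of a non-negative random variable to the tail sum. Specifically, for any non-negative random variable $X$,
\[
\sum_{n \ge 0} P(X > n) \le E[X] + 1 < \infty \quad \text{whenever } E[X] < \infty.
\]
Applying this with $X = \overline{\tau}^{1/\eta}$, whose expectation is finite by $(A2)(v)(b)$, gives
\[
\sum_{n \ge 0} P(\tau_{ji}(n) > n^\eta) \le \sum_{n \ge 0} P(\overline{\tau}^{1/\eta} > n) < \infty.
\]
The Borel--Cantelli lemma then yields the claim. There is no real obstacle here; the result is essentially packaged into $(A2)(v)(b)$, which was designed precisely so that $\tau_{ji}(n)$ is eventually dominated by $n^\eta$ almost surely, matching the rate at which $a(n) = o(n^{-\eta})$ decays and enabling the subsequent bounds on $e_i^n$.
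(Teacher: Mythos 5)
Your proof is correct, and while it shares the paper's overall skeleton (bound $\sum_n P(\tau_{ji}(n) > n^\eta)$ via stochastic domination and invoke the first Borel--Cantelli lemma), the key estimate is handled differently. The paper passes through Markov's inequality with a perturbed exponent: it picks $\beta = \eta - \delta < \eta$, asserts (somewhat informally) that one may arrange $\mathbb{E}\,\overline{\tau}^{1/\beta} < \infty$, and then bounds $P(\overline{\tau} > n^\eta) \le \mathbb{E}\,\overline{\tau}^{1/\beta} / n^{\eta/\beta}$, so that summability comes from $\eta/\beta > 1$. Note that $\mathbb{E}\,\overline{\tau}^{1/\beta} < \infty$ is a strictly stronger moment condition than the stated $(A2)(v)(b)$, so this step is really a hand-waved re-choice of $\eta$. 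Your route avoids this entirely: rewriting $P(\overline{\tau} > n^\eta) = P(\overline{\tau}^{1/\eta} > n)$ and using the elementary tail-sum bound $\sum_{n \ge 0} P(X > n) \le \mathbb{E}[X] + 1$ for nonnegative $X$ uses exactly the assumed moment $\mathbb{E}\,\overline{\tau}^{1/\eta} < \infty$, with no $\delta$-perturbation and no Markov step. In that sense your argument is both more economical and more faithful to the hypothesis as stated; the paper's version would be needed only if one insisted on a polynomially decaying tail bound of the form $C n^{-(1+\gamma)}$, which the lemma does not require.
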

\begin{proof}
To prove the above statement, we show that $\sum \limits_{n \ge 0} P(\tau_{ji}(n) > n^\eta) < \infty$ and invoke the Borel-Cantelli lemma \cite{durrett2019probability}.

Recall that $(A2)(v)$ requires the existence of $\eta$ such that $\mathbb{E} \tau^{1/ \eta} < \infty$ and $a(n) \in o(n^{-\eta})$. It is easy to see one can choose $\eta$ such that $\mathbb{E} \tau^{1/ (\eta - \delta)} < \infty$, $1/2 \le \eta - \delta$ and $(A2)(v)$ hold, using a very small $\delta$. Let us define $\beta \coloneqq \eta - \delta$, then $\mathbb{E} \tau^{1/ \beta} < \infty$. Also, recall that $\tau_{ij}(n)$ is stochastically dominated by $\tau$ for $n \ge 0$ and $1 \le i, j \le D$. Hence,
\[
P(\tau_{ji}(n) > n^\eta) \le P(\tau > n^\eta).
\]
It follows from the Markov inequality that
\[
P(\tau > n^\eta) \le \frac{\mathbb{E} \tau ^{1/\beta}}{n^{\eta / \beta}}, \text{ and that}
\]
\[
\sum \limits_{n \ge 0} P(\tau_{ji}(n) > n^\eta) \le \mathbb{E} \tau ^{1/\beta} \sum \limits_{n \ge 0} \frac{1}{n^{\eta / \beta}}.
\]
We have that $\{ \nicefrac{1}{n^{\eta / \beta}} \}_{n \ge 1}$ is a summable sequence, since $\beta < \eta$ by construction. Hence we get that $\sum \limits_{n \ge 0} P(\tau_{ji}(n) > n^\eta) < \infty$, as required.
\end{proof}
Informally, the above lemma states that there is a sample path dependent $0 < N < \infty$ such that $\tau_{ij}(n) \le n^\eta$ for all $n \ge N$. From (A2)(v)(a) and (A4), we get that almost surely $a(\nu(n, i)) \tau_{ij}(n) \in o(1)$ for $n \ge N$. If we combine these observations with \eqref{ana_eq1}, we get that $e^n \in o(1)$ for $n \ge N$. In other words, the errors associated with delays vanish asymptotically.

\textit{The remainder of the analysis, in this section, overlaps with the analysis presented in Section 4 of \cite{ramaswamy2018asynchronous}. Hence, we merely outline the important steps involved and refer the reader to \cite{ramaswamy2018asynchronous} for details}.

Using (A1)--(A5) and hitherto presented discussions, we get that \eqref{usc_real} converges to a closed connected invariant set associated with
\[
\dot{x}(t) \in \Lambda(t) \nabla_x F(x(t)) + \overline{B}_\epsilon (0),
\]
where $\epsilon$ is the asymptotic norm-bound on the additive error terms. It may be noted that the $\Lambda$ process may be thought of as capturing the relative update frequency of the agents involved, in the asymptotic sense.

It follows from (A4) that all the agents are updated in the same order of magnitude. Hence, for $1 \le i,j \le D$, one can ensure that:
\[
\lim \limits_{n \to \infty} \frac{\sum \limits_{m=0}^n a(\nu(m, i))}{\sum \limits_{m=0}^n a(\nu(m, j))} \text{ \ \ \ \ \ exists.}
\]
When the above equation is satisfied, the step-size sequence is said to be ``balanced''. An important role of (A4) is to ensure that the given steps-size sequence is ``balanced''.  Then, it follows from the theory developed in \cite{borkar1998asynchronous} that
\[
\Lambda(t) = \begin{pmatrix} 
\nicefrac{1}{D} & \dots & 0 \\
\vdots & \ddots & \vdots \\
0 & \dots & \nicefrac{1}{D}
\end{pmatrix}, \ t \ge 0.
\]
The diagonal matrix in the above equation is denoted by $diag(1/D, \ldots, 1/D)$. We may now conclude that \eqref{usc_real} converges to a closed connected invariant set associated with 
\begin{equation} \label{ana_eq2}
\dot{x}(t) \in diag(1/D, \ldots, 1/D) \nabla_x F(x(t)) + \overline{B}_\epsilon (0).
\end{equation} 

\vspace*{4pt}

\noindent
 Before we present our main result, let us state Theorem 2 from Chapter 6 of \cite{aubin2012differential}:
 \textit{
 Let $H$ be a upper semicontinuous set-valued map, from a compact $\mathcal{K} \subset \mathbb{R}^d$ to $\mathbb{R}^d$ such that $H(x)$ is compact convex for every $x \in \mathbb{R}^d$. Further, let $x(\cdotp)$ be a solution to $\dot{x}(t) \in H(x(t))$ that converges to $x^* \in \mathcal{K}$. Then, $x^*$ is an equilibrium point of $H$.
 }
 
\vspace*{4pt} 
 
 As stated earlier, the asymptotic behavior of \eqref{usc_real} and \eqref{ana_eq2} are identical. It follows from $(A3)$ that there exists $R< \infty$ such that $x^n \in \overline{B}_R(0)$ for all $n \ge 0$. Since any limit-point of the $x^n$ sequence belongs to $\overline{B}_R(0)$, a solution to \eqref{ana_eq2} tracked by \eqref{usc_real}, converges to $\overline{B}_R(0)$. The conditions of the above stated theorem are satisfied, and \eqref{ana_eq2} converges to an equilibrium point of $diag(1/D, \ldots, 1/D) \nabla_x F + \overline{B}_\epsilon (0)$. In particular, \eqref{usc_real} converges to $x^\infty$ such that $0 \in diag(1/D, \ldots, 1/D) \nabla_x F(x^\infty) + \overline{B}_\epsilon (0)$. Hence \eqref{usc_real} converges to a small neighborhood of the minimum values of $F$. Further, this neighborhood is a function of $\epsilon$, the asymptotic norm-bound on the additive error terms. The following theorem is an immediate consequence of the hitherto presented discussions. Again, the reader is referred to Section 4 of \cite{ramaswamy2018asynchronous} for details.
 \begin{theorem} \label{ana_main}
 Under $(A1)-(A5)$, the general asynchronous optimization algorithm given by \eqref{usc_real} converges to a small neighborhood of the minimum set of $F$. The size of this neighborhood is a function of the asymptotic norm-bound on the additive error terms.
  \end{theorem}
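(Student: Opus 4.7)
The plan is to reduce the analysis of \eqref{usc_real} to the asynchronous stochastic approximation framework of \cite{ramaswamy2018asynchronous} applied to the equivalent recursion \eqref{asmp_cons}. Four error contributions must be controlled before that framework applies: the martingale noise $M^{n+1}$, the additive perturbation $\epsilon^n$, the delay mismatch between $\nabla_x F$ evaluated at delayed iterates versus at $x^n$, and the asynchronous step-size imbalance captured by $\lambda^n$. Most of the machinery is already assembled in Sections 3.3 and 3.6, so the proof is largely an orchestration.

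First I would handle the martingale noise: by Lemma~\ref{asmp_noise_conv}, the cumulative process $\zeta^n = \sum_{m < n} \overline{a}(m) \lambda^m M^{m+1}$ converges almost surely, so the sample-driven recursion has the same asymptotic behavior as its conditional-mean counterpart. Next I would discard the delay error. Restricted to the sample-path ball $\overline{B}_R(0)$ furnished by (A3), Lemma~\ref{asmp_L} yields a global Lipschitz constant for $\nabla_x f$ uniformly in $\xi$, and \eqref{ana_eq1} then bounds $e_i^n$ by a constant multiple of $a(\nu(n,i))\,\tau_{ji}(n)$. Lemma~\ref{ana_BC} gives $\tau_{ji}(n) \le n^\eta$ eventually, which combined with (A2)(v)(a) and (A4) forces $a(\nu(n,i))\,\tau_{ji}(n) = o(1)$; the delay error is therefore asymptotically negligible.

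Once noise and delays are absorbed, the iteration behaves asymptotically as an interpolated trajectory of the differential inclusion $\dot x(t) \in \Lambda(t)\nabla_x F(x(t)) + \overline{B}_\epsilon(0)$, where $\Lambda(t)$ is the limiting relative-update matrix and the $\overline{B}_\epsilon(0)$ term encodes (A5). To pin down $\Lambda(t)$ I would invoke (A4) together with (A2)(ii) to verify the step-size ``balance'' condition of \cite{borkar1998asynchronous}; their theory then gives $\Lambda(t) \equiv \mathrm{diag}(1/D, \ldots, 1/D)$, yielding the limiting inclusion \eqref{ana_eq2}. Standard stochastic approximation arguments then show that $\{x^n\}$ converges almost surely to a closed connected invariant set of \eqref{ana_eq2} lying inside the compact stability ball $\overline{B}_R(0)$.

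Finally, to upgrade ``invariant set'' to ``neighborhood of the minimum set of $F$,'' I would apply Theorem 2 of Chapter 6 in \cite{aubin2012differential} to the upper semicontinuous, convex-compact-valued map $H(x) := \mathrm{diag}(1/D, \ldots, 1/D)\nabla_x F(x) + \overline{B}_\epsilon(0)$ on $\overline{B}_R(0)$: any tracked solution which converges must converge to an equilibrium of $H$, so $0 \in \mathrm{diag}(1/D, \ldots, 1/D)\nabla_x F(x^\infty) + \overline{B}_\epsilon(0)$, giving a bound on the distance from $x^\infty$ to the zero set of $\nabla_x F$ in terms of $\epsilon$. The step I expect to be most delicate is checking that the hypotheses of \cite{ramaswamy2018asynchronous} apply verbatim even though our assumption (A1)(ii) only asks for $\xi$-dependent local Lipschitzness of $\nabla_x f$; this is precisely what Lemma~\ref{asmp_L} is designed for, by globalizing the Lipschitz constant uniformly over the (compactified) noise space.
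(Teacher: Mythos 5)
Your proposal is correct and follows essentially the same route as the paper: martingale noise via Lemma~\ref{asmp_noise_conv}, delay errors killed by Lemma~\ref{asmp_L}, (A3), Lemma~\ref{ana_BC} and (A2)(v)(a)/(A4), the balanced-step-size argument from \cite{borkar1998asynchronous} giving $\Lambda(t)=\mathrm{diag}(1/D,\ldots,1/D)$, and the equilibrium characterization from Theorem 2, Chapter 6 of \cite{aubin2012differential} applied to the limiting inclusion \eqref{ana_eq2}. Your closing remark about Lemma~\ref{asmp_L} globalizing the $\xi$-dependent local Lipschitz constant is exactly the role it plays in the paper.
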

  \begin{remark}
  \label{ana_remark2}
  If $\epsilon = 0$ in $(A5)$, i.e., there are no additive errors, then we get that \eqref{usc_real} converges to $x^\infty$, an equilibrium point of $diag(1/D, \ldots, 1/D) \nabla_x F$. Since, $diag(1/D, \ldots, 1/D) \nabla_x F(x^\infty) = 0$, we get that $\nabla_x F(x^\infty) = 0$. In other words, when $\epsilon = 0$, iteration \eqref{usc_real} converges to a minimum of $F$. As $\epsilon$ increases, the algorithm is expected to converge farther from the minimum set. Therefore, when $\epsilon \uparrow \infty$ a minimum of $F$ cannot be found.
  \end{remark}
\section{The Algorithm} \label{sec_algo}
In this section, we use the theory developed in Section~\ref{sec_gen_analysis} to develop practical verifiable sufficient conditions on network topology and quality, and the CP, see (A6) in Section~\ref{sec_algo_ana}. This is one of the major contributions of this paper. \emph{To the best of our knowledge, the network condition (A6) is the most practical, weak and easy to verify assumption in the literature}. Since we do not assume anything substantial with regards to network topology, we present a CP that is topology agnostic. In addition to being easy-to-implement, our CP is more widely applicable even to scenarios with partial or zero knowledge of the topological properties. Motivated by the theory presented in Section~\ref{sec_gen_analysis} and the form of iteration \eqref{usc_real}, we present Algorithm~\ref{algo1}. It codifies, succinctly, the CP and the gradient computations of various agents.

The organization of this section is as follows. First, we present Algorithm~\ref{algo1}, the distributed delayed gradient-based optimization algorithm. Next we present (A6), the network-related assumption. Finally, we analyze Algorithm~\ref{algo1} under (A6). Specifically, we show that assumptions (A1)-(A5) are satisfied, and use Theorem~\ref{ana_main} to understand its long-term behavior.

Recall that a multi-agent system and its associated communication network can be modeled as a digraph (directed graph) $\mathcal{G} \coloneqq (\mathcal{V}, \mathcal{E})$, i.e., $\mathcal{G}$ is the MAS-graph. Vertex $i \in \mathcal{V}$ \textit{iff} agent-i belongs to our system. Edge $(i, j) \in \mathcal{E}$ \textit{iff} agent-i can communicate to agent-j directly. Define $\mathcal{N}^{in}(i) := \{j \mid (j, i) \in \mathcal{E}\}$ and $\mathcal{N}^{out}(i) := \{j \mid (i, j) \in \mathcal{E}\}$, for every $i \in \mathcal{V}$. If all channels are bidirectional, then $\mathcal{N}^{in}(i) = \mathcal{N}^{out}(i)$ for every $i$. If every agent can communicate with every other agent directly, then $\mathcal{N}^{in}(i) = \mathcal{N}^{out}(i) = \mathcal{V} \setminus \{i\}$.


We assume that agent-i maintains the latest available estimates, $\hat{X}_i := (\hat{x}_{1i}, \ldots, \hat{x}_{Di})$, from other agents in the system. At time $n$, $\hat{x}_{1j} = x^{m(j)}_j$ for some $0 \le m(j) \le n$ and $1 \le j \le D$. To facilitate consistent updates for $\hat{X}_i$, $1 \le i \le D$, each agent appends its estimate with a local timestamp, before transmitting across the network. Without loss of generality, agent-i has instant access to its local estimates, i.e., $\hat{x}_{ii} = x^n_i$ at time $n$. Also without loss of generality, we assume that the agents exchange information in accordance to the network clock, although they update local estimates in accordance to local clocks. Specifically, agent-i sends $\hat{X}_i$ to every agent in $\mathcal{N}^{out}(i)$ and recieves $\{ \hat{X}_j \mid j \in \mathcal{N}^{in}(i)\}$ at every tick of the network clock. Using $\{ \hat{X}_j \mid j \in \mathcal{N}^{in}(i)\}$ and the time-stamps contained within them, agent-i updates its list $\hat{X}_i$. Note that both local clocks and network delays lead to asynchronicity between agents. Let us codify these ideas into the following algorithm.
\begin{algorithm} 
\SetAlgoLined
Initialize local decision variable estimate $\hat{x}_i$ \;
 Synchronize with other agents to initialize $\hat{X}_i$ \;
 \For{every tick of the network clock}
 {
 \If{the local clock also ticked}{
 1. Obtain network sample $\xi$ \;
 2. $\hat{x}_i \leftarrow \hat{x}_i - a(\nu(n, i)) \nabla_{x_i} f(\hat{X}_i, \xi)$ \;
 3. Update $\hat{x}_{ii}$ of $\hat{X}_i$ to the new $\hat{x}_i$ and append the local timestamp $\nu(n, i)$ \;
 }
 4. Send $\hat{X}_i$ to agent-j, for $j \in \mathcal{N}^{out}(i)$ \;
 5. Receive $\hat{X}_j$ from agent-j, for $j \in \mathcal{N}^{in}(i)$ \;
 6. Update $\hat{X}_i$ using $\{\hat{X}_j \mid \hat{X}_j \text{ successfully received}, \ j \in \mathcal{N}^{in}(i)\}$  \;
 }
 \caption{Optimization algorithm w.r.t. agent-i}
 \label{algo1}
\end{algorithm}

Since we assume imperfect communication, the estimate-list sent by agent-j may not reach agent-i. In other words, in step 5 of Algorithm 1, agent-i may only receive a strict subset of $\hat{X}_j$s from its neighbors. Using this, it updates $\hat{x}_{ki}$, the estimate associated with agent-k, to that $\hat{x}_{kj}$ which is appended with the largest $\nu(m, k)$ (local timestamp of agent-k), that is also greater than the $\nu(m, k)$ associated with the current estimate $\hat{x}_{ki}$,  where $m \le n$, $1 \le k \le D$ and $j \in \mathcal{N}^{in}(i)$. 
\begin{remark}
Algorithm 1 states that all agents send and receive information in accordance to the network clock (steps $4$ and $5$). However, this requirement can be easily relaxed to obtain identical convergence results. In fact, agent-i is only required to send information every time its ``list'' $\hat{X}_i$ is updated, or in the case of repeated failed transmissions. The list is then updated, at every tick of the local clock and when information is received from its neighbors.
\end{remark}
\subsection{Network requirement (A6) and convergence of Algorithm~\ref{algo1}} \label{sec_algo_ana}
Before proceeding, let us introduce a few useful graph theoretical terms \cite{west1996introduction}. A path $\mathcal{P}$ is a sequence of vertices such that any vertex is connected to the next vertex by an edge, i.e., $\mathcal{P} = i_0, \ldots, i_m$ and $(i_j , i_{j+1}) \in \mathcal{E}$ for $0 \le j \le m-1$. The length of a path is given by the number of edges in it, and an edge is trivially a length-1 path. We use $iPj$ to represent a path whose first vertex is $i$ and last vertex is $j$. We use $p(i\mathcal{P}j, x)$ to represent the probability of successfully transmitting information $x$ from agent-i to agent-j, using path $\mathcal{P}$, in the shortest possible time. In other words, $p(i\mathcal{P}j, x)$ represents the probability that every link along $\mathcal{P}$ transmits successfully, one after the other. \textit{If the wireless channels are modeled as Markov fading channels then $p(i\mathcal{P}j, x)$ may change over time.}

To apply the analysis from Section~\ref{sec_ana} to understand Algorithm~1, we need the following simple assumption on the underlying transmissions. 

\vspace*{5pt}
\begin{itemize}
\item[\textbf{(A6)}] \begin{itemize}
\item[(i)] For every $1 \le i, j \le D$, there exists a path connecting agent-i to agent-j, $i\mathcal{P}^{ij} j$, such that $p(i\mathcal{P}^{ij}j, x) > 0$.
\item[(ii)] At any time, the success or failure of transmissions along different edges are independent.
\end{itemize}
\end{itemize}
\vspace*{5pt}

Informally (A6) requires that: (a) there is a positive probability of transmitting information between any two agents and (b) the success / failure of transmissions over one edge does not interfere with transmissions over the others. Clearly (A6)(ii) is the harder requirement, as compared to (A6)(i), to ensure. One way, is by letting the channel frequencies be sufficiently separated, so as to cause zero/minimal interference with each other. In typical real-world applications such as industries, the network topology is fixed and the channel quality varies with the system state. In particular, at any time-step, transmissions along different channels are independent when conditioned on the current system state, see \cite{quevedo2012state} for details. The independence requirement in (A6)(ii) can be readily relaxed to the aforementioned conditional independence for the industrial application.

To apply the analysis from Section~\ref{sec_ana} to understand Algorithm 1, we must verify (A1)--(A5). Since there are no additive errors, (A5) is trivially satisfied. (A1) is satisfied by design. If the experimenter enforces that the agents must synchronize their clocks with the network clock, then (A4) is satisfied. While this enforcement is standard, there are several other ways to ensure (A4). Note that (A4) merely requires that all agents are updated in the same order of magnitude. This condition is stated by requiring that the number of agent-i updates is in $\Theta(n)$, $1 \le i \le D$. Note that the $\Theta$ notation is used to indicate that the two quantities involved have similar asymptotic growth.
Since, the experimenter is typically at liberty to choose the step-size sequence, (A2)(i)-(iv) are trivially satisfied. We refer the reader to sufficient conditions in \cite{ramaswamy2018asynchronous} to ensure $(A3)$. \emph{It is now left to show that (A2)(v) is satisfied}.
\begin{lemma}
\label{algo_lemma}
Under (A6), Algorithm~\ref{algo1} satisfies (A2)(v).
\end{lemma}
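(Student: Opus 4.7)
The plan is to verify (A2)(v)(a) and (A2)(v)(b) separately. For (a), the experimenter is free to choose the step-size, so taking, e.g., $a(n) = 1/n$ (already compatible with (A2)(i)--(iv)) gives $a(n) = o(n^{-\eta})$ for every $\eta \in (1/2, 1)$; fix any such $\eta$ and proceed.

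For (b), the task is to produce a single random variable $\overline{\tau}$ that stochastically dominates every $\tau_{ji}(n)$ and whose $1/\eta$-moment is finite. By (A6)(i), for each ordered pair $(j,i)$ fix a path $\mathcal{P}^{ji}$ with strictly positive per-pass success probability; set $L \coloneqq \max_{j,i}|\mathcal{P}^{ji}|$ and let $p_0 > 0$ be a uniform lower bound on these probabilities, taken across time (the standing reading of (A6)(i) in the time-varying case). I then slice the network-clock timeline into disjoint blocks of $L$ consecutive ticks. Within each block, Algorithm~\ref{algo1} relays agent-$j$'s freshest time-stamped estimate along $\mathcal{P}^{ji}$, and if the $L$ edges of $\mathcal{P}^{ji}$ fire successfully in the correct sequential order during the block, then by the time-stamped merge rule (step~6 of Algorithm~\ref{algo1}) the list entry $\hat{x}_{ji}$ is refreshed to a version no more than $L$ ticks stale by the end of that block. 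By (A6)(ii) the edge-success events at distinct ticks and on distinct edges are independent, so the one-pass success probability in each block is at least $p_0$, and the success indicators across disjoint blocks form an i.i.d.\ Bernoulli sequence with parameter at least $p_0$. Consequently the number of blocks elapsed until the most recent successful pass is stochastically dominated by a geometric random variable $G$ with parameter $p_0$, and $\tau_{ji}(n) \le L \cdot G$ in the stochastic order, uniformly in $n, j, i$. Setting $\overline{\tau} \coloneqq L \cdot G$ and noting that geometric variables have finite moments of every order, I conclude $\mathbb{E}[\overline{\tau}^{1/\eta}] < \infty$ for the chosen $\eta$, verifying (b).

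The principal obstacle is the geometric-dominator bookkeeping. Assumption (A6)(ii) is precisely what decouples the edge-success events across ticks and across edges, making the per-block indicators i.i.d.\ and yielding the geometric tail required for a finite $1/\eta$-moment; without it, this moment bound could genuinely fail. A secondary care point is that Algorithm~\ref{algo1}'s time-stamped merge rule upper-bounds the staleness of $\hat{x}_{ji}$ by the time elapsed since the last successful one-pass relay along $\mathcal{P}^{ji}$---alternative multi-hop paths and asynchronous forwarding can only help, never hurt, but this monotonicity should be stated explicitly. Once these two points are pinned down, the remainder is a routine coupling argument combined with the standard moment calculation for the geometric distribution.
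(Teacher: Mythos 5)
Your proposal is correct and follows essentially the same route as the paper: fix, for each ordered pair, a path with (uniformly) positive success probability from (A6)(i), use (A6)(ii) to make repeated relay attempts along it independent, deduce a geometric tail bound on the staleness of $\hat{x}_{ji}$, and take $\overline{\tau}$ with that geometric-type tail so that all moments (in particular the $1/\eta$-moment) are finite, with the step-size part of (A2)(v) handled by the experimenter's choice. The only difference is cosmetic bookkeeping: you partition time into disjoint blocks of length $L$, whereas the paper works with the time-staggered per-tick events $A^t_{ij}$ and bounds $P(\hat{\tau}_{ij}(n)>m)\le p^{m-l+1}$ directly; the constant-factor slack (e.g.\ $L\cdot G$ versus an exact count) is immaterial for the moment bound.
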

\begin{proof}
We prove the statement of the lemma by finding a random variable $\overline{\tau}$ that stochastically dominates $\tau_{ij}(n)$ such that $\mathbb{E} \overline{\tau} ^2 < \infty$, $1 \le i, j \le D$ and $n \ge 0$. Then, (A2)(v) is satisfied for any $\eta \in (\nicefrac{1}{2}, 1)$. Fix $1 \le i,j \le D$ and $n \ge 0$. Recall that $\tau_{ij}(n)$ denotes the age of the agent-i-estimate available at agent-j at time $n$, i.e., the age of $\hat{x}_{ij} \in \hat{X}_j$. Let $\mathcal{P}$ be the shortest length path satisfying (A6)(i), and let $l$ be its length. We define a random variable $\hat{\tau}_{ij}(n)$, to denote the age of the latest agent-i-estimate that was successfully transmitted using $\mathcal{P}$ in the shortest possible time, i.e., in $l$ steps. Since $\tau_{ij}(n)$ denotes the age of the last agent-i-estimate that was successfully transmitted, we have that $\tau_{ij}(n) \le \hat{\tau}_{ij}(n)$ a.s. Hence, it is enough to find a random variable $\overline{\tau}$ that stochastically dominates $\hat{\tau}_{ij}(n)$.

Let $A^k _{ij}$ denote the event that the $\hat{X}_i$-vector sent at time $k$ (of the network clock) was successfully transmitted via $\mathcal{P}$ in $l$ steps. Then,
\begin{equation}
\label{algo_eq1}
P(\hat{\tau}_{ij}(n) > m) = P \left( \substack{n-l \\ \bigcap \\ t=n-m} \left( A_{ij}^t \right)^c \right), 
\end{equation}
where $\left( A_{ij}^t \right)^c$ denotes the complement of event $A^t _{ij}$. We claim that the events $\{ A^t _{ij} \mid n-m \le t \le n-l \}$ are independent. To see this, we begin by observing that $A^t _{ij}$ unfolds over time starting at $t$ and ending at most before $t+l$. Further, at any given time instance each of these events can be associated with different edges. The independence of the above mentioned set of events now follows from the independence of the associated edges, i.e., (A6)(ii). Now, equation \eqref{algo_eq1} becomes:
\begin{equation*}
P(\hat{\tau}_{ij}(n) > m) = \substack{n-l \\ \pro \\ t=n-m} P \left( \left( A_{ij}^t \right)^c \right). 
\end{equation*}
It follows from (A6)(i) that there is $p$, such that $ \sup \limits_{n-m \le t \le n-l}P \left( \left( A_{ij}^t \right)^c \right) \le p < 1$. Hence we get,
\begin{equation} \label{algo_eq2}
P(\hat{\tau}_{ij}(n) > m) \le  p^{m - l + 1}.
\end{equation}
Please note that the above inequality is independent of $n$. Further, without loss of generality, the above inequality holds for every $(i,j)$ pair with the same $p$ and $l$. 

Let us define $\overline{\tau}$ by describing its cumulative distribution function as follows: $P(\overline{\tau} \ge 0) := 1$ and $P(\overline{\tau} \ge m) := p^{m - l}$, $m \ge 1$. It now follows from \eqref{algo_eq2} that $\overline{\tau}$ stochastically dominates $\tau_{ij}(n)$ for all $1 \le i,j \le D$ and $n \ge 0$. 

It is left to show that $\mathbb{E} \overline{\tau} ^2 < \infty$. To do this, we observe that:
\[
\mathbb{E} \overline{\tau} = \sum \limits_{m = 0}^\infty m P (\overline{\tau} \ge m) \le \sum \limits_{m = 0}^\infty m p^{m-l}.
\]
The required result now follows from the simple observation that:
\[
\sum \limits_{m = 0}^\infty m p^{m-l} \le \int \limits_{0}^\infty x \left(\nicefrac{1}{p} \right)^{ -x + l} \ dx < \infty.
\]
\end{proof}
Hence, we have shown that Algorithm 1 satisfies the assumptions required to apply Theorem~\ref{ana_main}. The following is therefore an immediate consequence of the aforementioned theorem and Remark~\ref{ana_remark2}.
\begin{theorem}
\label{algo_main}
Algorithm 1 finds a minimum of $F$ when (A1), (A2)(i-iv), (A3) and (A6) are ensured.
\end{theorem}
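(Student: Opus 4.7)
The plan is to verify, one by one, that Algorithm~\ref{algo1} under the stated hypotheses satisfies all five assumptions (A1)--(A5) used in Theorem~\ref{ana_main}, and then invoke that theorem together with Remark~\ref{ana_remark2} to conclude convergence to an actual minimum (not merely a neighborhood). Most of the verification is already laid out in the paragraph preceding the theorem statement, so the proof is essentially a bookkeeping argument that collects those observations and then calls on Lemma~\ref{algo_lemma} for the one nontrivial piece.

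First I would point out that (A1), (A2)(i)--(iv) and (A3) are listed in the hypotheses of the theorem, so nothing is required there. Next I would dispatch (A5): Algorithm~\ref{algo1} uses exact sample gradients $\nabla_{x_i} f(\hat{X}_i,\xi)$ with no SPSA-style perturbation or finite-difference error, so the additive error term $\epsilon^n_i$ in \eqref{usc_real} is identically zero, whence $\limsup_n \lVert \epsilon^n \rVert = 0$ and (A5) holds with $\epsilon = 0$. This last point is crucial for the final step, since Remark~\ref{ana_remark2} only gives convergence to a minimum (rather than to a neighborhood) precisely in the case $\epsilon = 0$.

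Then I would handle (A4). As the discussion before the theorem notes, (A4) only requires that each agent updates at a rate $\Theta(n)$ relative to the hypothetical global clock. The standard enforcement is to have each local clock coincide with the network clock, so that every agent updates at every network tick and $\nu(n,i) = n$, trivially giving $\liminf_n \nu(n,i)/n = 1 > 0$; any scheme in which agent update frequencies are merely lower bounded by a positive fraction of network ticks suffices.

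The final and only substantive step is (A2)(v), which is exactly the content of Lemma~\ref{algo_lemma}: under (A6)(i)--(ii), the delays $\tau_{ij}(n)$ in Algorithm~\ref{algo1} are stochastically dominated by a geometric-type random variable $\overline{\tau}$ with $\mathbb{E}\,\overline{\tau}^{1/\eta} < \infty$ for every $\eta \in (\tfrac12,1)$; choosing the step-size so that $a(n) = o(n^{-\eta})$ (permitted by (A2)(i) and within the liberty the experimenter has in selecting $\{a(n)\}$) completes (A2)(v). With all assumptions of Theorem~\ref{ana_main} verified and $\epsilon = 0$, Remark~\ref{ana_remark2} immediately yields that Algorithm~\ref{algo1} converges to a point $x^\infty$ with $\nabla_x F(x^\infty) = 0$, i.e., to a minimum of $F$. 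I do not anticipate any real obstacle: the heavy lifting was done in Theorem~\ref{ana_main} and Lemma~\ref{algo_lemma}, and the proof is a short checklist.
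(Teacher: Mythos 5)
Your proposal is correct and follows essentially the same route as the paper: verify (A5) trivially (no additive errors, so $\epsilon=0$), ensure (A4) via clock synchronization, defer (A3) and the step-size conditions to the hypotheses and the experimenter's choice, obtain (A2)(v) from Lemma~\ref{algo_lemma} under (A6), and then invoke Theorem~\ref{ana_main} together with Remark~\ref{ana_remark2}. This matches the paper's own argument, which is exactly this checklist with Lemma~\ref{algo_lemma} as the only substantive step.
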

\section{Empirical Results}
\label{sec_empirical}

We now present experimental results to substantiate the hitherto developed theory. We consider a 16-agent system such that $\mathbb{R}^2$ is the local decision space of every agent in the system. The agents are connected by a wireless network whose topology is time-varying. This is simulated by randomly choosing, at every step, between $N=4$ digraphs $\{\mathcal{G}_n \equiv (\mathcal{V}, \mathcal{E}_n)\}_{n=1}^4$. Since we assume full asynchronicity, it is important to note that the network topology is randomly switched at every tick of the network clock. For $1 \le i \le 16$, define $\rho_i \coloneqq \frac{a_i}{b_i}$, where $a_i$ is the number of agent-i clock ticks for every $b_i$ network clock ticks. We call $\rho_i$ the $i^{th}$ tick-ratio. For our experiments $\{\rho_i\}_{i=1}^{16}$ are chosen to be independent random variables, each sampled uniformly from $[1/5, 1]$. Note that $\{\rho_i\}_{i=1}^{16}$ is used to simulate the asynchronicity between various agents in the system, and with the wireless network. We choose them in $[1/5, 1]$, since we want to satisfy (A4).

For added complexity, we constrain each of the $4$ network topologies to contain exactly $8$ edges, i.e., $\left| \mathcal{E}_n \right| = 8, \ 1\le n \le 4$. Each edge represents a communication link that is modeled as a Markov fading channel with state space $\mathcal{Z} = \left\{z_0,z_1, \ldots, z_{K-1} \right\}$, parameterized by tuples 
$(T,p,e)$, where $T$ denotes the transition probability matrix, $p$ denotes the steady state probability vector $p$ (defined by the equilibrium condition $p^\top T = p^\top$), and $e$ denotes the crossover probability vector, see \cite{wang1995finitestate} for details. In each channel state $z_d$, fading results in communication drop outs with probability in accordance to the $d$-th component of the crossover probability vector $e$.

Let us say that all agents try to simultaneously minimize the perturbed ``average network distance''. The average is taken over a network switching process involving $4$ different topologies. Then, the global objective function is given by:
\[
F(x) \coloneqq \mathbb{E}_{\xi} f(x , \xi), \text{ where}
\]
$f(x, \xi) \coloneqq x^T \left[L_\xi + 0.1 \mathbb{I} \right] x$; $x = (x_1, \ldots x_{16})$ is the appended decision vector; $L_{\xi}$ is the Laplacian matrix associated with the graph $\mathcal{G}_{\xi} \equiv (\mathcal{V}, \mathcal{E}_{\xi})$; $\xi \in \{1, \ldots, 4\}$ is the random variable associated with network switching; and $\mathbb{I}$ is a square matrix of order $32$. The Laplacian $L_\xi$ associated with a network topology is positive semi-definite. When perturbed by $0.1 \mathbb{I}$, it becomes positive definite. We work with this perturbed ``average network distance'' measure to force the associated optimization problem to have a unique global minimum. Working with a perturbed Laplacian is common in literature, see \cite{rocha2016fiedler, bapat2001perturbed}. There are several ways to perturb a Laplacian, we do the above as it is simple, and is sufficient to illustrate the properties of Algorithm~\ref{algo1}.

The analyses from Sections~\ref{sec_ana} and \ref{sec_algo} are applicable, provided assumption (A6) is satisfied for the experimental setup described above. Convergence to the minimum of $F$ is then a consequence of Theorem~\ref{algo_main}. To show that (A6) is satisfied, we consider the union graph $\hat{\mathcal{G}} \coloneqq \left(\mathcal{V}, \bigcup \limits_{n=1}^{4} \mathcal{E}_n  \right)$. At the tick of the network clock, the sample of $\xi$ determines which among the $4$ available network topologies is to be activated. For $\hat{\mathcal{G}}$, this means that a subset of edges $\mathcal{E}_\xi$, corresponding to topology $\mathcal{G}_\xi$, is ``active''. This means that all possible transmissions, at that time, are along these active edges. There is no information transmission along the remaining ``inactive'' edges. We however, take the quasi-equivalent view that transmissions along inactive edges are also possible, only to fail with probability $1$. This view allows us to move from time-varying network topologies to considering a static network topology, given by $\hat{\mathcal{G}}$, that does not change over time.
(A6)(i) is satisfied when $\hat{\mathcal{G}}$ is strongly connected. Recall that a digraph is strongly connected when any vertex is reachable from any other vertex. (A6)(ii) is ensured if we assume that all channels are uncorrelated. In practice, this assumption is satisfied when the channel frequencies are sufficiently separated.

Fig.~\ref{fig: network_topologies} illustrates the union-graph of the four network topologies used in our experiments. Algorithm~1 is run for $5000$ steps, using the following step-size sequence:
\begin{equation*}
a(\nu(n,i)) = \frac{1}{\frac{\nu(n,i)}{50} + 50},\ n \le 5000 \text{ and }1 \le i \le 16
\end{equation*}
The initial estimates/ decision variables of all $16$ agents are randomly chosen from the annulus centered at the origin with inner ring radius of $500$ and outer ring radius of $1000$. The Laplacian matrices were perturbed such that the origin is the unique minimizer of $F$. The results of our experiments are illustrated in Figures~\ref{fig: plane} and \ref{fig: cost_conv}. The various agent estimate-trajectories, in $\mathbb{R}^2$, are illustrated in Fig.~\ref{fig: plane}. The estimates converge to the local optima, which put together minimize the ``average network distance''. This is illustrated by the convergence of the perturbed ``average network distance'' towards the origin, in Fig.~\ref{fig: cost_conv}. 
%

\begin{figure}[!t]
\centering
\includegraphics[width=3in]{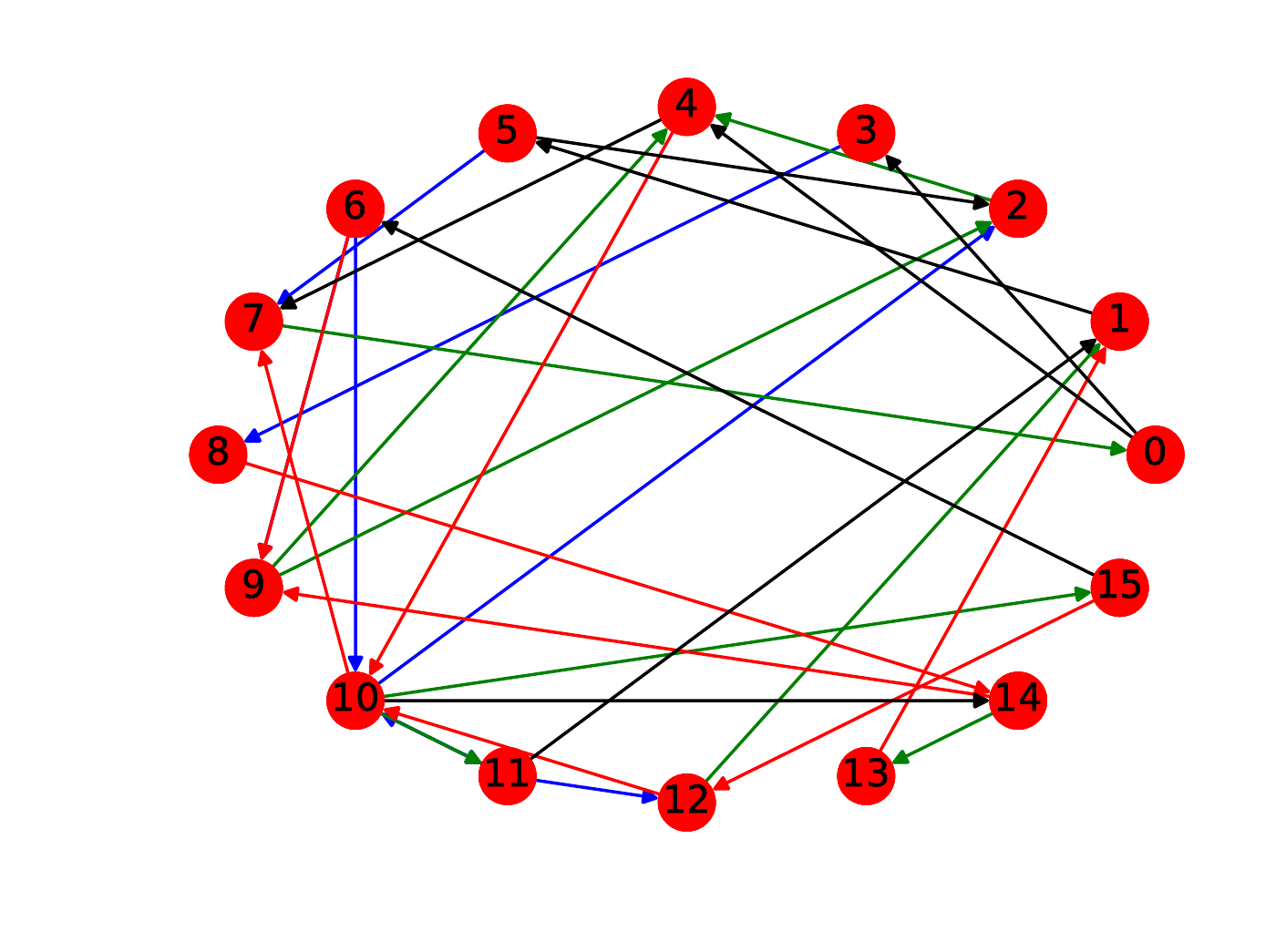}
\caption{Four super-imposed network topologies whose union is a strongly connected graph.}
\label{fig: network_topologies}
\end{figure}

\begin{figure}[!t]
	\centering
	\includegraphics[width=3in]{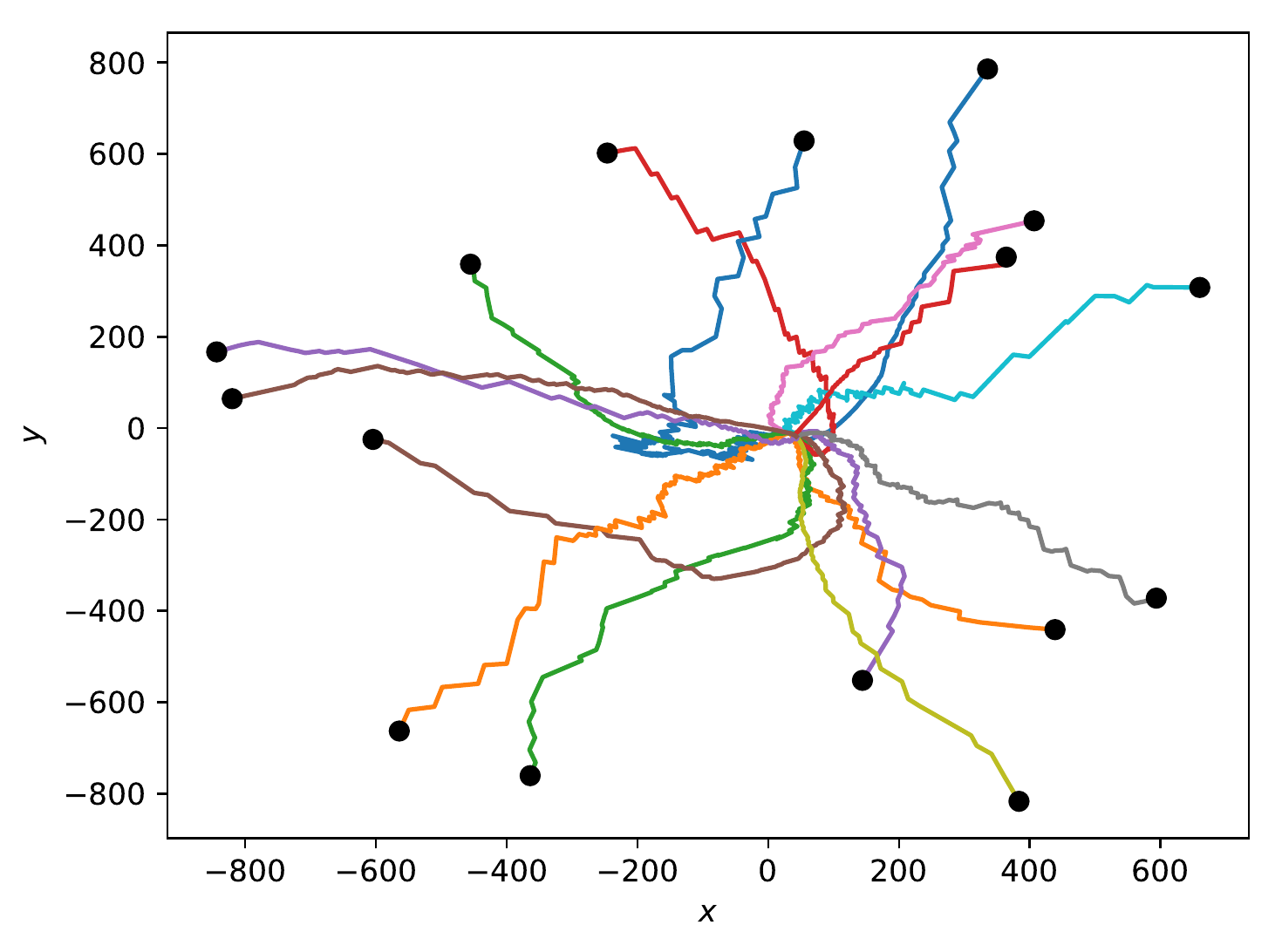}
	\caption{Sixteen agent estimate-trajectories are illustrated in $\mathbb{R}^2$ using different colors.}
	\label{fig: plane}
\end{figure}

\begin{figure}[!t]
	\centering
	\includegraphics[width=3in]{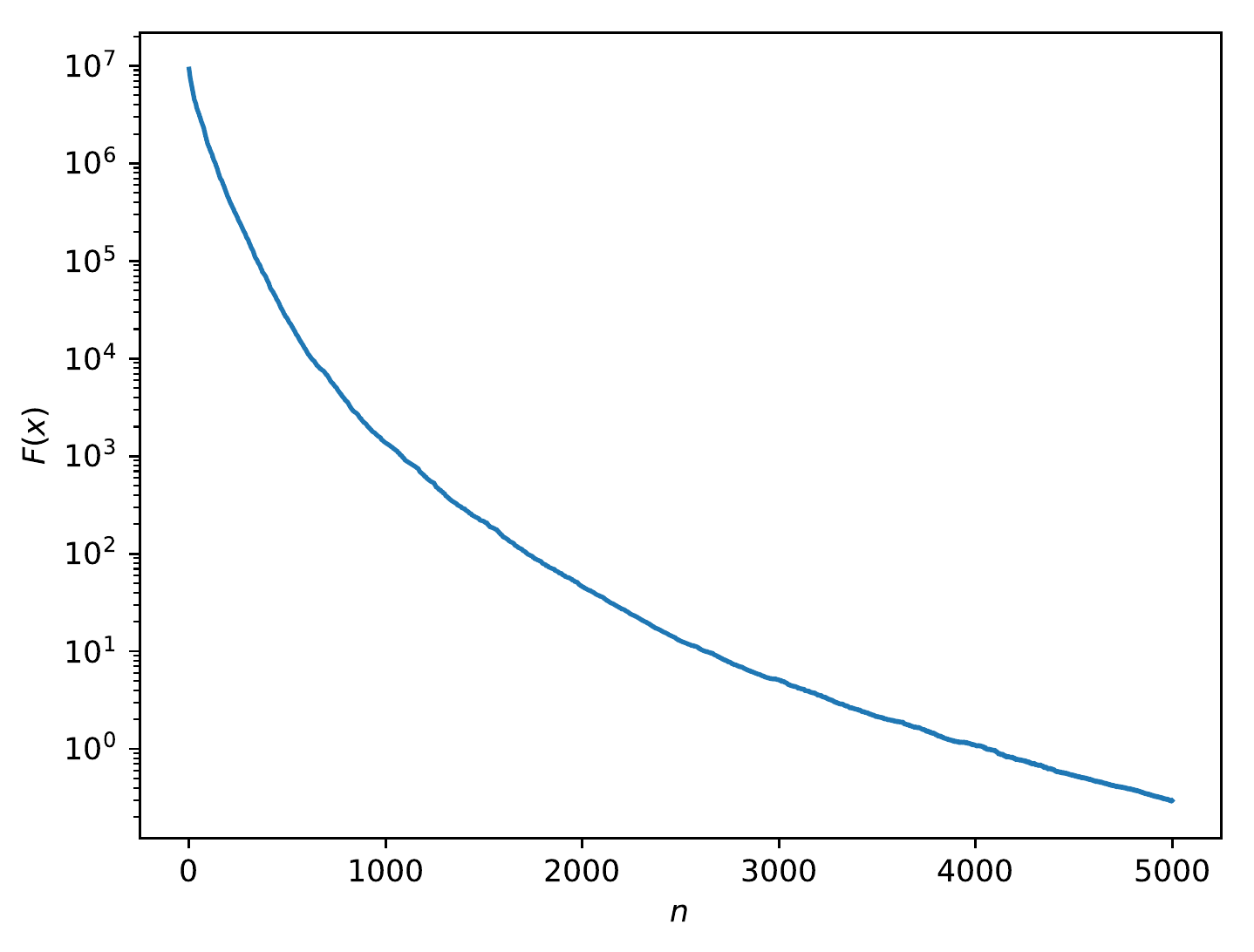}
	\caption{Convergence of the perturbed ``average network distance'' trajectory (F).}
	\label{fig: cost_conv}
\end{figure}

\section{Special Case: Cumulative Consensus}
\label{sec_cons}
Consensus is an important optimization problem that commonly arises in multi-agent systems. Popular forms of consensus include cumulative and average consensus. In this section, we focus on the cumulative consensus problem. Here, the D-agent system cooperatively tries to solve the following global optimization problem:
   \begin{equation} \label{cons_eq}
   \underset{x \in \mathbb{R}^d}{\text{argmin }} \sum \limits_{i=1}^D f_i(x),
   \end{equation}
   where $f_i: \mathbb{R}^d \to \mathbb{R}$ is a local function that is only accessible to agent-i; $x \equiv (x(1), \ldots, x(D))$ and $x(i)$ is interpreted as a local control variable of agent-i. The global objective function $F$ is such that $F: x \mapsto \sum \limits_{i=1}^D f_i(x)$.
   
   We begin by noticing the key differences between problems \eqref{intro_opt_prob} and \eqref{cons_eq}. In \eqref{intro_opt_prob}, noisy samples of the global objective function is available to all agents. In \eqref{cons_eq}, each agent only observes its local objective, and in this sense the global objective is partially observed. On the surface, the consensus problem may seem harder than \eqref{intro_opt_prob} due to partial observability. However, we present simple modifications to Algorithm~\ref{algo1} to solve the consensus problem. Further, we show that the analyses from Section~\ref{sec_ana} and \ref{sec_algo} describe completely the behavior of this new modified algorithm.

   The solution to the cumulative consensus presented in this section involves choosing one of the agents as an ``arbiter'', who is then responsible for all computations involved in solving the given consensus problem. All other agents in the system support the arbiter by supplying it with information required to perform the aforementioned computations. A completely distributed approach, in the vein of \eqref{usc_real}, where the computational load is shared by all agents, will turn out to be a simple extension of the arbiter-approach. We present the former for the sake of simplicity and clarity. It may be noted that the arbiter-approach is susceptible to the ``single point of failure'' problem. However, it should also be noted that the arbiter-approach uses much lesser resources than the fully distributed approach. This is because all agents need only to communicate with the arbiter, not with each other. 
   \begin{remark}
   Suppose that all paths connecting any two vertices in the associated communication graph pass through a single vertex/ agent. Then, that agent is clearly a single point of failure even in a distributed approach. For this case, the arbiter approach is in fact optimal when the said agent is chosen as the arbiter. 
   \end{remark}
   
   Motivated by (A1), we make the following assumption regarding the local objectives: \\
   \vspace*{.05cm}\\
   \textbf{(C)} For $1 \le i \le D$, $\nabla f_i$ is locally Lipschitz continuous.\\
   \vspace*{.05cm}\\
 Similar to Algorithm~1, we associate a digraph $\mathcal{G}$ with the multi-agent system at hand. We assume that $\mathcal{G}$ satisfies (A6) and let agent-1 be the arbiter. Clearly agent-1 is strongly connected to every other agent. In the centralized paradigm, the arbiter is assumed to have complete knowledge of all local objectives $\{f_i\}_{i=1}^D$, and it may solve the consensus problem as follows:
 \begin{equation}\label{eq_central_arbit}
 x^{n+1} = x^n - a(n) \sum \limits_{i=1}^D \nabla f_i (x^n).
 \end{equation}
 However, as we are in a decentralized setting, agent-1 only has knowledge of $\nabla_x f_1$. It must obtain all additional gradients from other agents in the MAS, if it is to execute an iteration similar to \eqref{eq_central_arbit}. We have assumed that the network is unreliable, and that every pair of agents may not be connected by a dedicated wireless channel. Hence, gradient information obtained by agent-1, from agent-j, $j \neq 1$, may be old. At time $n$, instead of \eqref{eq_central_arbit}, agent-1 (arbiter) executes:
 \[
 x^{n+1} = x^n - a(n) \sum \limits_{i=1}^D g_1^j (n),
 \]
 where $g_1^j (n)$ is the, possibly delayed, agent-j gradient information available with the arbiter at time $n$. 
 
 Similar to Algorithm~1, agent-i updates and \emph{forwards} the estimate-vector $\hat{X}_i \coloneqq \left(\hat{x}_i, g_i^2, \ldots, g_i^D \right)$; where $\hat{x}_i$ is the latest estimate of the solution to the consensus problem available with agent-i, and $g_i^j$ is the latest agent-j gradient information that is available with agent-i. Specifically, at time $n$, $g_i^j = \nabla f_j (x^{n - \tau(i,j)})$, for some delay $0 \le \tau(i,j) \le n$, for $2 \le j \le D$. At every tick of the network clock, agent-i sends its estimate vector $\hat{X}_i$ to all agents belonging to $\mathcal{N}^{out}(i)$. At the same time, it receives estimate-vectors $\left\{ \left( \hat{x}_j, g_j^2, \ldots, g_j^D \right) \mid j \in \mathcal{N}^{in}(i) \right\}$ from its ``in-neighbors''. Due to the unreliable nature of the wireless network, it  may only receive information from a strict subset of in-neighbors. This is then used to update $\hat{X}_i$ at the next tick of the agent-i clock, similar to step-6 in Algorithm~\ref{algo1}. Finally, agent-i calculates $\nabla f_i (\hat{x}_i)$ and updates $g_i^i$ to this new gradient value. In the case of the arbiter, $g_i^j$, for $2 \le j \le D$ are updated and the following descent step is executed:
 \begin{equation}
 \label{agent1_it}
 x^{n+1} = x^n -  a(\nu(n, 1)) I(1 \in Y_n) \sum \limits_{i =1}^D g^j_1. 
 \end{equation}
 Then, the arbiter revises $\hat{x}_1$ to $x^{n+1}$ and $g_1^1$ to $\nabla f_1(x^{n+1})$. Although the agents forward information in accordance to the network clock, local computations and updates are governed by local clocks. This naturally leads to asynchronicity between agents, that is captured by the $Y_n$ process in \eqref{agent1_it}. Please note that we have considered asynchronicity for the sake of generality and completeness. To understand the main ideas of this section, the reader may simply omit the $Y_n$ process, and assume that the agents are synchronized their clocks.

  If we consider the gradient information used in \eqref{agent1_it}, then $g_1 ^i = \nabla f_j \left(x^{n - \tau_j} \right)$, for some delay $0 \le \tau_j \le n$, $1 \le j \le D$. Each delay random variable $\tau_j$ can be decomposed into $\tau_j^1$ and $\tau_j^2$, i.e., $\tau_j = \tau_j^1 + \tau_j^2$. Here $\tau_j^1$ is the number of steps taken by $x^{n - \tau_j}$ to reach agent-j from agent-1, and $\tau_j^2$ is the time taken for $\nabla f_j \left( x^{n - \tau_j} \right)$ to return to agent-1. Since the events associated with $\tau_j^1$ and $\tau_j^2$ are disjoint in time, it follows from $(A6)(ii)$ that $\tau_j^1$ and $\tau_j^2$ are independent. Using arguments similar to the proof of Lemma~\ref{algo_lemma}, we can show that there is a random variable $\overline{\tau}$ that stochastically dominates $\tau_j^1$ and $\tau_j^2$, for $1 \le j \le D$, such that $\mathbb{E} \overline{\tau}^2 < \infty$. Hence $2 \overline{\tau}$ stochastically dominates $\tau_j$ and $\mathbb{E} \left[2\overline{\tau} \right]^2 < \infty$, and \eqref{agent1_it} satisfies $(A2)(v)$. Now, we can readily apply analyses from Sections~\ref{sec_ana} and \ref{sec_algo} to understand its long-term behavior, and the following theorem is immediate.
  \begin{theorem}
  \label{cons_thm}
  Under (C), (A2)(i - iv), (A3), (A4) and (A6) the above described consensus algorithm, \eqref{agent1_it}, achieves consensus.
  \end{theorem}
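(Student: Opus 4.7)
The plan is to cast the arbiter's update \eqref{agent1_it} as an instance of the general delayed asynchronous gradient iteration \eqref{usc_real} studied in Section~\ref{sec_ana}, and then invoke Theorem~\ref{ana_main}, strengthened by the $\epsilon = 0$ case of Remark~\ref{ana_remark2}, to conclude convergence to a point $x^\infty$ with $\nabla F(x^\infty) = \sum_{j=1}^D \nabla f_j(x^\infty) = 0$, which is by definition a solution of the cumulative consensus problem \eqref{cons_eq}. Since only agent-1 mutates the master iterate, this reduces to checking that the single-effective-agent delayed gradient descent driven by the quantities $g^j_1(n) = \nabla f_j(x^{n - \tau_j})$ satisfies (A1)--(A5).

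First, I would dispose of the routine assumptions. Assumption (A1) follows immediately from (C) via $\nabla F = \sum_j \nabla f_j$. Assumption (A5) is automatic with $\epsilon = 0$, because the arbiter computes exact gradients of the deterministic local objectives $f_j$; this is precisely what lets Remark~\ref{ana_remark2} upgrade the conclusion from ``close to the minimum set'' to an actual stationary point of $F$. Assumptions (A2)(i)--(iv) are satisfied by the experimenter's choice of step-size and (A3) is hypothesized outright. For (A4), note that in the arbiter-only view the only nontrivial ratio is $\nu(n,1)/n$, and (A4) applies to every agent.

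The substantive step is to verify (A2)(v), formalising the sketch given in the paragraph preceding Theorem~\ref{cons_thm}. I would write $\tau_j = \tau_j^1 + \tau_j^2$, where $\tau_j^1$ is the transit time of $x^{n - \tau_j}$ from the arbiter to agent-$j$ and $\tau_j^2$ is the return time of $\nabla f_j(x^{n - \tau_j})$ back to the arbiter. Because $\tau_j^1$ and $\tau_j^2$ are measurable with respect to channel-transmission events on disjoint time windows along paths in $\mathcal{G}$, assumption (A6)(ii) renders them independent. Rerunning the argument of Lemma~\ref{algo_lemma} separately on each leg (which uses only (A6)(i) along the chosen path plus edgewise independence of failures) yields a common random variable $\overline{\tau}$ with geometrically decaying tail that stochastically dominates each of $\tau_j^1$ and $\tau_j^2$ and satisfies $\mathbb{E}\,\overline{\tau}^2 < \infty$. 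Consequently $2\overline{\tau}$ stochastically dominates $\tau_j$ and retains a finite second moment, which delivers (A2)(v) for any $\eta \in (1/2, 1)$.

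With all of (A1)--(A5) in hand, the cited theorem and remark close the argument. The main obstacle I anticipate is the careful bookkeeping required to certify that the forward leg $\tau_j^1$ and the return leg $\tau_j^2$ really are measurable with respect to disjoint collections of channel events: the arbiter continuously injects new iterates at the network clock ticks, agent-$j$ returns gradients along a possibly multi-hop path, and paths used in each direction may share edges. Once this temporal disjointness is spelled out (so that (A6)(ii) genuinely applies between the two legs), the stochastic-domination bound is a direct translation of Lemma~\ref{algo_lemma}, and the rest of the proof follows with no further work.
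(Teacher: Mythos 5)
Your proposal follows essentially the same route as the paper: reduce \eqref{agent1_it} to the general iteration of Section~\ref{sec_gen_analysis}, verify (A2)(v) by splitting each delay as $\tau_j=\tau_j^1+\tau_j^2$, using (A6)(ii) for independence of the two legs and a Lemma~\ref{algo_lemma}-style domination argument to get $2\overline{\tau}$ with finite second moment, and then conclude via Theorem~\ref{ana_main} and the $\epsilon=0$ case of Remark~\ref{ana_remark2}. Your explicit check of (A1)--(A5) and your flagging of the temporal-disjointness bookkeeping are, if anything, slightly more careful than the paper's own sketch, which asserts these points without elaboration.
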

  For easy reference, below we present the consensus algorithm, obtained by modifying Algorithm~\ref{algo1}.
  \begin{algorithm}
\SetAlgoLined
 Synchronize with other agents and initialize $\hat{X}_i$ \;
 \For{every tick of the network clock}
 {
 1. Send $\hat{X}_i$ to agent-j, for $j \in \mathcal{N}^{out}(i)$ \;
 2. Receive $\hat{X}_j$ from agent-j, for $j \in \mathcal{N}^{in}(i)$ \;
 \If{the local clock also ticked}{
 3. (i) Agent-i ($i \neq 1$) updates $\hat{x}_i$ and $g_i ^j$, $2 \le j \le D$, using $\{ \hat{X}_j \mid j \in  \mathcal{N}^{in}(i) \}$ (ii) $g_i^i \leftarrow \nabla f_i(\hat{x}_i)$ \;
 4.  Agent-1 updates $g_1 ^j$, $2 \le j \le D$, using $\{ \hat{X}_j \mid j \in  \mathcal{N}^{in}(1) \}$ (ii) $\hat{x}_1 \leftarrow \hat{x}_1 - a(\nu(n, 1)) \sum \limits_{j =1}^D g_1 ^j$ (iii) $g_1^1 \leftarrow \nabla f_1(\hat{x}_1)$ \;
 }
 }
 \caption{Consensus Algorithm}
 \label{algo2}
\end{algorithm}

\begin{remark}
 Choosing the right agent as the arbiter is problem dependent. It is best to choose one based on ``accessibility'', i.e., choose an agent that is connected by multiple short paths composed of reliable channels, to all other agents. On the other hand, an agent that is connected to most agents by a single unreliable path is a bad candidate. In this section, we have chosen agent-1 as the arbiter as we don't assume anything beyond strong connectivity of the underlying graph.
 \end{remark}
 \subsection{Empirical Results: Algorithm~\ref{algo2}}
 Although Algorithm \ref{algo2} is expected to have similar empirical behavior to Algorithm~\ref{algo1}, we present quick results of an experiment, to corroborate the discussion from Section~\ref{sec_cons}.
In our experiment, we considered a 16-agent system such that the common control variable belongs to $\mathbb{R}^{32}$. The underlying communication network used, is identical to the one from Section~\ref{sec_empirical}. Also, the asynchronicity between agents is simulated using the procedure explained in Section~\ref{sec_empirical}.

The local objective of agent-i is given by:
\begin{equation}
	f_i(x) = x^T A_i x + b_i^T x + c_i,
\end{equation}
where $A_i \in \mathbb{R}^{32\times 32}$ is a symmetric positive definite matrix, $b_i \in \mathbb{R}^{32}$ and $c_i \in \mathbb{R}$. The matrices $A_i$, the vectors $b_i$ and the scalars $c_i$ are all randomly generated. 

We therefore have the following cumulative consensus problem: $$\underset{x\in \mathbb{R}^{32}}{\text{argmin}} \sum_{i=1}^{16} f_i(x).$$ Its analytic solution is given by:
\begin{equation}
	x = - \frac{1}{2}\left( \sum_{i=1}^{16}A_i \right)^{-1} \sum_{i=1}^{16} b_i.
\end{equation}

The arbiter (Agent 1) uses the following step-size sequence to update the estimates (step-4, Algorithm~\ref{algo2}).
\begin{equation}
a(\nu(n,1)) = \frac{1}{\frac{\nu(n,1)}{5} + 150}, \qquad n \le 2500
\end{equation}
The initial estimate is randomly chosen from a ball of radius $10$ centered at the origin, and the algorithm is run for $2500$ iterates. Fig.~\ref{fig: consensus_conv} illustrates the performance of Algorithm~\ref{algo2}, when applied to solve the above described consensus problem. In the figure: (a) the dotted line represents the analytical solution to the cumulative consensus problem, (b) the dark blue line illustrates the convergence of the cumulative objective function to the analytical solution, and (c) all the other colored lines illustrate the convergence of local objective functions, of all agents in the MAS.

\begin{figure}
	\includegraphics[width=3.5in]{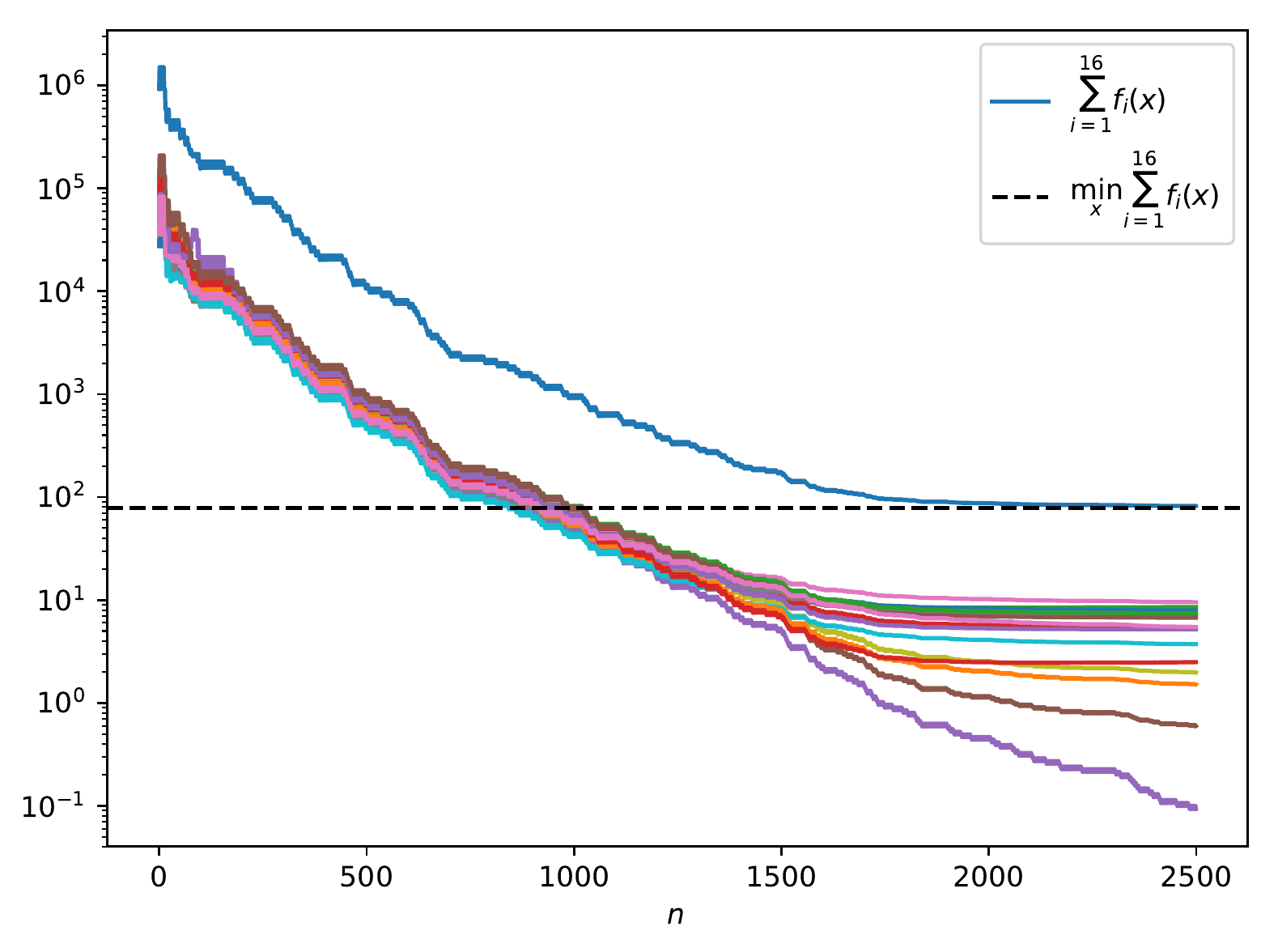}
	\caption{The dotted line represents the analytical solution to the consensus problem and the dark-blue line illustrates the convergence of the cumulative objective function, over $2500$ iterates, to the analytical minimum. Convergence of local objective functions is also illustrated using different colors.}
	\label{fig: consensus_conv}
\end{figure}

\section{Conclusions}
In this paper, we considered the problem of distributed optimization in large-scale multi-agent systems, connected by an unreliable wireless network. We presented an analytical framework to understand the long-term behavior of distributed delayed (approximate) gradient-based optimization algorithms. The theory developed, allows for a larger class of objective functions, as compared to similar results from the literature. Specifically, the presented analyses requires that the objective functions have locally Lipschitz continuous gradients. In addition to being easy to verify, this condition encompasses highly non-linear objectives. The framework also accommodates the use of approximate gradients instead of exact ones. A major contribution of this paper is in the development of practical verifiable requirements on network quality and topological properties. In particular, we merely require that the probability of successful information exchange between agents is positive. These probabilities may vary over time and there may be delays involved in the information exchange process. This is due to the unreliable nature of the underlying wireless network, in addition to possessing time-varying topological properties. In particular, when the topological switching is governed by some stochastic process, our theory holds water even when the switching process is non-stationary. To the best of our knowledge, ours is the first analyses under such weak conditions. The theory was also used to develop a simple easy-to-implement optimization algorithm that uses distributed delayed gradient information.


Finally, the theory was also used to develop an algorithm to solve the consensus problem from control. To the best of our knowledge, ours is the first algorithm, and associated analysis, that considers truly unbounded stochastic delays for information exchange between agents, in addition to time-varying topologies.

In the future, we would like to conduct a thorough study of the rate of convergence of such algorithms. In particular, we would like to study the impact of both network topology and quality on the rate of convergence. Information exchange is an imperative aspect of the algorithms presented. However, it may be expensive. We also plan to study the optimal amount of information that needs to be transmitted, for solving the optimization problem. We would explore the possibility of developing a dynamic adaptive policy for information exchange.

\bibliographystyle{plain}
\bibliography{references}

\end{document}